\documentclass[12pt,a4paper]{amsart} %
\usepackage[
  a4paper,
  top=2.8cm,
  bottom=3.5cm,
  left=2.2cm,
  right=2.2cm,
  heightrounded
]{geometry}
\usepackage[skip=0.8em plus 0.2em minus 0.1em]{parskip}
\usepackage{amsmath,amsfonts,amssymb,amsthm}

\usepackage[colorlinks=true,linkcolor=blue,citecolor=blue,urlcolor=blue]{hyperref}

\allowdisplaybreaks

\theoremstyle{plain}
\newtheorem{question}{Question}

\newtheorem{theorem}{Theorem}

\newtheorem{corollary}{Corollary}
\newtheorem{lemma}{Lemma} 

\newtheorem{theoremx}{Theorem}

\newtheorem{definitionx}{Definition}

\newtheorem{lemmax}{Lemma}

\theoremstyle{remark}
\newtheorem{remark}{Remark}

\title{A discrete Hardy uncertainty principle}
\author{Torgeir Keun Lysen}
\email{torgeir.lysen@gmail.com}

\begin{document}
\begin{abstract}
    We show that knowing the decay of a function $f$ on a discrete set $\Lambda\subset\mathbb{R}$ and the decay of its Fourier transform $\hat{f}$ on a discrete set $M\subset\mathbb{R}$ is enough to determine the global decay of $f$ and $\hat{f}$, provided that $(\Lambda,M)$ is a supercritical pair in the sense of Kulikov, Nazarov, and Sodin. This decay transfer result leads to a discrete generalization of Morgan's uncertainty principle: it is enough to require $|f(\lambda)|\lesssim e^{-\frac{2}{p}A\pi|\lambda|^p}$ for all $\lambda\in\Lambda$ and $|\hat{f}(\mu)|\lesssim e^{-\frac{2}{q}A\pi|\mu|^q}$ for all $\mu\in M$, where $(p,q)$ are H\"{o}lder conjugates, $A>|\cos(\frac{r\pi}{2})|^\frac{1}{r}$, and $r:=\min\{p,q\}$. For $A=1$ and $p,q=2$, we also show that any such function must be a scaled Gaussian. This yields a discrete version of Hardy's uncertainty principle and resolves two questions posed by Ramos and Sousa.
\end{abstract}
\maketitle

\section{Introduction}
In this paper, we determine when Hardy's uncertainty principle and similar uncertainty principles can be recovered from knowledge of $|f|$ and $|\hat f|$ only on discrete subsets of $\mathbb{R}$. After the remarkable work of Radchenko and Viazovska on Fourier interpolation~\cite{Radchenko2019}, there has been substantial work on identifying how often a function can vanish in both time and frequency~\cite{ramos2022fourier,kulikov2025,adve2023density,lysen2025criticalasymmetricfourieruniqueness}. This is generally referred to as the study of Fourier uniqueness pairs and can be viewed as a discrete form of the uncertainty principle.
\begin{definitionx}[Fourier uniqueness pair]\label{def:fourier-uniqueness-pair}
    We call a pair $(\Lambda, M)$ with $\Lambda,M\subset \mathbb{R}$ a \emph{Fourier uniqueness pair} for a space $X\subset L^1(\mathbb{R})\cap L^2(\mathbb{R})$ if, for every $f\in X$,
    \[
    f|_\Lambda=0\quad\text{and}\quad \hat{f}|_M=0\quad\Longrightarrow\quad f\equiv 0.
    \]
    A pair that is not a \emph{Fourier uniqueness pair} is called a \emph{Fourier non-uniqueness pair}.
\end{definitionx}
We use the following normalization of the Fourier transform:
\[
\hat{f}(\xi):=\int_\mathbb{R}f(x)e^{-2\pi ix\xi}\,dx.
\]
Unless stated otherwise, we assume that the sequences $\Lambda,M\subset\mathbb{R}$ are ordered and unbounded in both directions.

We particularly note a result of Kulikov, Nazarov, and Sodin, who identified a broad class of Fourier uniqueness and non-uniqueness pairs called supercritical and subcritical pairs~\cite{kulikov2025}.
\begin{definitionx}[Supercritical and subcritical pairs]\label{def:density-criticality}
Given $1<p,q<\infty$ with $\frac{1}{p}+\frac{1}{q}=1$, we call a pair $(\Lambda, M)$ \emph{supercritical} if
\begin{align*}
        \limsup_{j\to\pm\infty}|\lambda_j|^{p-1}(\lambda_{j+1}-\lambda_j)\leq\overline{\alpha}\quad\text{and}\quad\limsup_{j\to\pm\infty}|\mu_j|^{q-1}(\mu_{j+1}-\mu_j)\leq \overline{\beta}
\end{align*}
with $\overline{\alpha}^\frac{1}{p}\overline{\beta}^\frac{1}{q}< \frac{1}{2}$. Conversely, the pair is \emph{subcritical} if there exists $\underline{\alpha}^\frac{1}{p}\underline{\beta}^\frac{1}{q}> \frac{1}{2}$ such that
\begin{align*}
        \liminf_{j\to\pm\infty}|\lambda_j|^{p-1}(\lambda_{j+1}-\lambda_j)\geq\underline{\alpha}\quad\text{and}\quad\liminf_{j\to\pm\infty}|\mu_j|^{q-1}(\mu_{j+1}-\mu_j)\geq \underline{\beta}.
\end{align*}

\end{definitionx}
We use $\mathcal{S}$ to denote the Schwartz space and $\mathcal{H}$ to denote the Fourier-symmetric Sobolev space
\[
\mathcal{H}:=\left\{f\in L^2(\mathbb{R}): \int_\mathbb{R}(1+x^2)|f(x)|^2\,dx+\int_\mathbb{R}(1+\xi^2)|\hat{f}(\xi)|^2\,d\xi<\infty\right\}.
\]
\begin{theoremx}[Kulikov--Nazarov--Sodin~\cite{kulikov2025}]\label{thm:kns-uniqueness}
    \textit{Suppose that $1 < p, q < \infty$, $\tfrac{1}{p} + \tfrac{1}{q} = 1$. Then}
\begin{itemize}
    \item[(i)] \textit{any supercritical pair $(\Lambda, M)$ is a uniqueness pair for $\mathcal{H}$;}
    \item[(ii)] \textit{any subcritical pair $(\Lambda, M)$ is a non-uniqueness pair for $\mathcal{S}$.}
\end{itemize}
\end{theoremx}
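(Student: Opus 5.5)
This is the Kulikov--Nazarov--Sodin theorem, which the paper cites rather than proves. I would follow their strategy in outline: a Fourier interpolation argument for part (i), and a construction of a nonzero function vanishing on both sets for part (ii).

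For part (i), I would argue by contradiction. Suppose $f\in\mathcal{H}$ satisfies $f|_\Lambda=0$ and $\hat f|_M=0$. I would first reduce to a statement about entire functions by considering the Gaussian-type transform
\[
F(z)=\int_\mathbb{R} f(x)\,e^{-\pi (x-z)^2}\,dx .
\]
Alternatively, I would use the Bargmann transform adapted to the pair $(p,q)$. This transform converts time and frequency information into growth of $F$ in different sectors of the complex plane.

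The key step is a quantitative estimate: a function vanishing on a sequence with gaps $\lambda_{j+1}-\lambda_j\lesssim \overline{\alpha}|\lambda_j|^{1-p}$ is small. Concretely, I would write $f(x)$ as an integral of $f'$ from the nearest zero and apply Cauchy--Schwarz on each gap. Doing the same on the Fourier side gives a weighted inequality
\[
\int |x|^{p-1}|f|^2 \lesssim \overline{\alpha}\,(\ldots)\int |\hat f|^2|\xi|^{\ldots},
\]
together with its symmetric counterpart. Iterating these two bounds should give decay of $f$ like $e^{-c|x|^p}$ and of $\hat f$ like $e^{-c|\xi|^q}$, with constants controlled by $\overline{\alpha},\overline{\beta}$. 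The condition $\overline{\alpha}^{1/p}\overline{\beta}^{1/q}<\frac12$ should be exactly what pushes the constants past the Morgan threshold. The classical Morgan/Hardy uncertainty principle then forces $f\equiv0$.

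The main obstacle is making this bootstrap close. Each iteration must improve the exponent, and the sharp constant $\frac12$ must come out exactly. I would handle this with a Phragmén--Lindelöf argument in sectors of opening $\pi/p$ and $\pi/q$ applied to $F$.

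For part (ii), given a subcritical pair, I would construct entire functions of order $p$ whose zero set contains $\Lambda$. Specifically, I would take a canonical product $G_\Lambda$ and a matching one $G_M$ on the Fourier side. I would then solve a Fourier interpolation problem, in the style of Radchenko--Viazovska, to produce a nonzero Schwartz function $f$ with $f|_\Lambda=0$ and $\hat f|_M=0$. The lower gap bounds are what leave enough room in the growth indicator to produce Schwartz decay.
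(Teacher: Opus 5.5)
The paper does not prove this theorem. It is quoted from~\cite{kulikov2025}, and part~(ii) is used only for sharpness remarks. However, the proof of Theorem~\ref{thm:discrete-beurling} reruns the KNS uniqueness mechanism, and measured against that, your part~(i) has a genuine gap exactly at the step you call the main obstacle.

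Poincar\'e--Wirtinger on the gaps plus Cauchy--Schwarz is the kind of estimate used in~\cite{kulikov2025} and behind Lemma~\ref{lem:gelfand-shilov} (Proposition~22 of~\cite{ramos2025pauli}). It yields only \emph{qualitative} decay $e^{-c|x|^p}$, $e^{-c|\xi|^q}$ with some unquantified $c>0$, for three reasons:
\begin{itemize}
\item the inequalities carry additive errors from the bounded region where the gap bounds are not yet in force;
\item the weights $|x|^{p-1}$ and $|\xi|^{q-1}$ produced on the two sides do not chain unless $p=q=2$;
\item each iteration loses constants.
\end{itemize}
So your expectation that $\overline{\alpha}^{1/p}\overline{\beta}^{1/q}<\frac12$ will push the constants past Morgan's threshold is the whole theorem stated as a hope. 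Separately, the Gaussian/Bargmann-type transform $F$ throws away the zeros of $f$ on $\Lambda$, which are your only information.

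The missing idea is to count zeros of $f$ itself. Once the qualitative bootstrap puts $f$ in $\mathcal{S}(p,q)$, $f$ is entire of order $p$, and the argument runs as follows.
\begin{itemize}
\item After $z\mapsto z^{1/p}$, the points $\lambda_j^p$ are zeros of $f(z^{1/p})$ with linear density at least $(\alpha p)^{-1}$, for any $\alpha>\overline{\alpha}$.
\item The decay of $\hat f$ bounds the growth of $f$ off the axis through $h_f(\theta)\le 2\pi|\sin\theta|^p/(p(q\kappa_{\hat f})^{p/q})$ (Lemma~\ref{lem:kappa-estimate}, Claim~5.1 of~\cite{kulikov2025}).
\item Jensen's formula on the disks $x+x\sin\varphi\,\mathbb{D}$ inside a thin sector (Lemma~\ref{lem:jensen-counting}) pits zero density against exponential type in $|\Im z|$.
\end{itemize}
Optimising over the angle gives $\kappa_f>(2\alpha)^{-q}\kappa_{\hat f}$; this is the argument of Lemma~\ref{lem:kappa-bound}, with no interpolation correction needed since $f$ genuinely vanishes. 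Symmetrically, $\kappa_{\hat f}>(2\beta)^{-p}\kappa_f$. These two inequalities are incompatible once $\alpha>\overline{\alpha}$ and $\beta>\overline{\beta}$ are chosen with $\alpha^{1/p}\beta^{1/q}\le\frac12$, which strict supercriticality allows (see the first paragraph of the proof of Theorem~\ref{thm:discrete-beurling}). That is where the sharp $\frac12$ comes from.

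Your architecture (upgrade the decay, then apply Morgan) can be rescued. A function vanishing on a supercritical pair satisfies the hypotheses of Theorem~\ref{thm:discrete-beurling} for every large $a$ with $b/a$ fixed in $\bigl((2\overline{\alpha})^q,(2\overline{\beta})^{-p}\bigr)$. But the decay transfer there is itself the zero-counting argument above, so Morgan becomes a convenience rather than the engine.

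Part~(ii) is not yet an argument. The canonical products $G_\Lambda$ and $G_M$ each handle one side only, and multiplying by $G_\Lambda$ gives no control of the Fourier transform on $M$. Producing a single nonzero $f$ with both $f|_\Lambda=0$ and $\hat f|_M=0$ is the entire difficulty. The Radchenko--Viazovska formula is built from modular forms attached to the specific critical pair $\{\pm\sqrt n\}$ and says nothing about general subcritical sequences. Your remark that the lower gap bounds leave room in the growth indicator just restates the hypothesis. What is needed is a construction that uses the slack $\underline{\alpha}^{1/p}\underline{\beta}^{1/q}>\frac12$ quantitatively to couple the two vanishing conditions. That is the substance of the KNS non-uniqueness theorem, which this paper uses only as a black box.
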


Recently, Ramos and Sousa proved that, for some of these supercritical pairs, it is enough to assume Gaussian decay only on the discrete sets of the pair.
\begin{theoremx}[Theorem~3 in~\cite{ramos2025pauli}]\label{thm:discrete-hardy}
    Let $A>1$ be fixed. There is $C_A>0$ such that the following holds. Suppose $\Lambda,M\subset\mathbb{R}$ are two discrete sets satisfying
    \begin{equation}\label{eq:condition-discrete-hardy}
        \max\left\{\limsup_{j\to\pm\infty}|\lambda_j|(\lambda_{j+1}-\lambda_j),\,\limsup_{j\to\pm\infty}|\mu_j|(\mu_{j+1}-\mu_j)\right\}<\frac{1}{C_A}.
    \end{equation}
    Then there are no non-zero functions $f\in\mathcal{H}$ such that
    \[
    \sup_{\lambda\in\Lambda}|f(\lambda)|e^{A\pi|\lambda|^2}+\sup_{\mu\in M}|\hat{f}(\mu)|e^{A\pi|\mu|^2}<\infty.
    \]
\end{theoremx}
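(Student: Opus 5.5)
We prove Theorem~\ref{thm:discrete-hardy} by transferring the discrete Gaussian decay to global Gaussian-type decay and then invoking the classical Hardy uncertainty principle. Since $A>1$, it suffices to show that $|f(x)|\lesssim e^{-A'\pi x^2}$ and $|\hat f(\xi)|\lesssim e^{-A'\pi\xi^2}$ for some $A'>1$ and all real $x,\xi$. Hardy's theorem (with $A'\cdot A'>1$) then forces $f\equiv0$.

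The transfer rests on the interpolation machinery behind Theorem~\ref{thm:kns-uniqueness}(i). For a supercritical pair, Kulikov--Nazarov--Sodin construct, for each $x$, coefficients $a_\lambda(x)$ and $b_\mu(x)$ with
\[
f(x)=\sum_{\lambda\in\Lambda}a_\lambda(x)f(\lambda)+\sum_{\mu\in M}b_\mu(x)\hat f(\mu),\qquad f\in\mathcal H.
\]
These coefficients come with quantitative bounds, of at most polynomial or subexponential growth in $x$, $\lambda$, $\mu$. Our plan is not to use this formula for $f$ directly, since the formula alone only gives boundedness of $f$. Instead we apply it to modulated and Gaussian-tilted versions of $f$. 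Concretely, we consider $g_t(x)=f(x)e^{2\pi t x}$, or a metaplectic or dilation variant, chosen so that $g_t$ stays in the admissible class. The hypothesis then gives $|g_t(\lambda)|\lesssim e^{-A\pi\lambda^2+2\pi t\lambda}\le e^{\pi t^2/A}$, with a symmetric bound for $\hat g_t$ on $M$ (a shift). The representation yields $|g_t(x)|\lesssim C(x,t)e^{\pi t^2/A}$. Optimizing in $t$ by taking $t\approx Ax$ gives $|f(x)|\lesssim C\,e^{-A\pi x^2}$ up to the growth of the coefficients.

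Alternatively, and more robustly, we use a Phragmén--Lindelöf argument. First we show that $f$ extends to an entire function of order $2$ with type controlled by the discrete data; this uses the decay of $\hat f$ on $M$ together with the interpolation formula applied to $\hat f$. Then we look at the sector functions $F(z)=f(z)e^{A''\pi z^2}$. The density condition~\eqref{eq:condition-discrete-hardy}, with the constant $C_A$ chosen large, ensures that the gaps of $\Lambda$ near $\lambda$ are at most about $1/(C_A|\lambda|)$. A bound on $f'$ of size $|\lambda|e^{-\pi\lambda^2(\cdot)}$ over such a gap then changes the value by a factor that can be absorbed, so the discrete bound extends to the whole real line with a slightly worse constant $A'>1$ (still $>1$ if $C_A$ is large). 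The same is done for $\hat f$ on $M$.

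The main obstacle is the bootstrap. We need an a priori derivative or analytic bound for $f$ near the real axis that is itself Gaussian-small, yet we only know $f\in\mathcal H$. Our first attempt would be to iterate. Using Theorem~\ref{thm:kns-uniqueness}-type interpolation estimates, we get a crude global decay $e^{-\epsilon x^2}$. That decay makes $\hat f$ entire of order $2$, so Cauchy estimates in a strip of width $\sim1/|x|$ control $f'$ by $|x|\sup|f|$ on a nearby disc. Each iteration through the gap-filling step then improves $\epsilon$ toward $A$, losing only a factor depending on $C_A$. The constant $C_A$ is chosen precisely so that the fixed point of this iteration exceeds $1$.
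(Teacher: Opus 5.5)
There is a genuine gap. Your overall plan is the same as the paper's: upgrade the discrete decay to global bounds $|f(x)|\lesssim e^{-A'\pi x^2}$, $|\hat f(\xi)|\lesssim e^{-A'\pi\xi^2}$ with $A'>1$, then apply Hardy's theorem. But neither of your transfer mechanisms works.

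\textbf{First route.} Kulikov--Nazarov--Sodin's part (i) is a uniqueness statement. It does not come with an interpolation formula whose coefficients are quantitatively controlled for general supercritical pairs. The tilting step fails regardless. Since $\widehat{g_t}(\xi)=\hat f(\xi+it)$ is an \emph{imaginary} shift, the hypothesis on $\hat f|_M$ says nothing about $\widehat{g_t}|_M$. Already for $f(x)=e^{-\pi x^2}$ one has $|\hat f(\mu+it)|=e^{-\pi\mu^2+\pi t^2}$. Bounding $\sup_\mu|\hat f(\mu+it)|$ essentially means bounding $\int|f(x)|e^{2\pi tx}\,dx$, which is the decay you are trying to prove. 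The requirement $g_t\in\mathcal H$ is circular in the same way.

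\textbf{Second route.} The bootstrap cannot improve the exponent. Suppose your current bound is of Gelfand--Shilov type, $|f(z)|\lesssim e^{-c\pi(\operatorname{Re}z)^2+O((\operatorname{Im}z)^2)}$. Cauchy's estimate on discs of radius $\rho/|x|$ gives $|f'|\lesssim\rho^{-1}|x|e^{O(1)}e^{-c\pi x^2}$ on the gap of length at most $1/(C_A|x|)$ containing $x$. Hence
$|f(x)|\lesssim e^{-A\pi x^2}+C_A^{-1}\rho^{-1}e^{O(1)}e^{-c\pi x^2}$.
A short gap buys only a constant factor, never a larger Gaussian rate, so the iteration stays at $c$ however large $C_A$ is.

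You also treat $f$ on $\Lambda$ and $\hat f$ on $M$ separately, but a correct argument must couple them. The values of $f$ on $\Lambda$ say nothing about $f$ between those points without frequency information from $\hat f|_M$.

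\textbf{What is missing} is a mechanism that turns many small values into extra decay. In the paper it runs as follows.
\begin{enumerate}
\item The Gelfand--Shilov lemma makes $f$ and $\hat f$ entire of order $2$.
\item One subtracts an interpolating correction $g$, built from Beurling's product on a uniformly discrete subsequence of $\{\lambda^2\}$. The small values then become genuine zeros of $f(\sqrt z)-g(z)$.
\item Jensen's formula in discs inside a sector, combined with the KNS indicator bound for $h_f$ in terms of $\kappa_{\hat f}$, gives $\kappa_f>(2\alpha)^{-2}\kappa_{\hat f}$ whenever $0<\kappa_f<a/2$. The symmetric statement holds for $\hat f$.
\item The density condition makes these two inequalities incompatible, which forces $\kappa_f\ge a/2$ and $\kappa_{\hat f}\ge b/2$.
\item A sectorial Phragm\'en--Lindel\"of argument against the canonical product upgrades this to $|f(x)|\lesssim(1+|x|)^{\tilde K}e^{-a\pi x^2}$, and likewise for $\hat f$.
\item Hardy's theorem then finishes the proof, with $C_A=2$.
\end{enumerate}
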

This can be viewed as a discrete version of Hardy's uncertainty principle, where one only requires the function and its Fourier transform to decay like a Gaussian on discrete sets of points.
We will use the following version of Hardy's uncertainty principle~\cite{hardyuncertainty} given in~\cite[Chapter~8, Theorem~6]{levinentirefunctions}.
\begin{theoremx}[Hardy's uncertainty principle]\label{thm:hardy}
Let $N\geq0$ and $f \in L^{2}(\mathbb{R})$. Assume that
\[
\sup_{x\in\mathbb{R}} |f(x)| (1 + |x|)^{-N}e^{\pi |x|^{2}} +\sup_{\xi\in\mathbb{R}}|\hat{f}(\xi)|(1 + |\xi|)^{-N} e^{\pi |\xi|^{2}} <\infty.
\]
Then $f(x) = P(x) e^{-\pi x^{2}}$, where $P$ is a polynomial and $\deg P\leq N$.
\end{theoremx}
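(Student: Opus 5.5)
The plan is to pass to the Fourier side, show that $\hat f$ extends to an entire function whose growth is controlled by a Gaussian, remove the Gaussian factor, and then use Phragm\'en--Lindel\"of and Liouville to show that what remains is a polynomial. The decay of $f$ makes $\hat f(z)=\int_\mathbb{R}f(t)e^{-2\pi i tz}\,dt$ converge for all $z\in\mathbb{C}$, so $\hat f$ is entire. For $z=x+iy$,
\[
|\hat f(x+iy)|\le C\int_\mathbb{R}(1+|t|)^Ne^{-\pi t^2+2\pi ty}\,dt\le C'(1+|y|)^Ne^{\pi y^2},
\]
obtained by completing the square $-\pi t^2+2\pi ty=-\pi(t-y)^2+\pi y^2$. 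Set $g(z):=\hat f(z)e^{\pi z^2}$, which is entire. Since $|e^{\pi z^2}|=e^{\pi(x^2-y^2)}$, we get
\[
|g(x+iy)|\le C'(1+|z|)^Ne^{\pi x^2}\quad\text{on }\mathbb{C}.
\]
Hence $g$ has order at most $2$ and finite type. On the boundary the bounds are much better:
\begin{itemize}
\item on the real axis, $|g(x)|\lesssim(1+|x|)^N$ by the hypothesis on $\hat f$;
\item on the imaginary axis, $|g(iy)|\lesssim(1+|y|)^N$ by the estimate above, since $x=0$ there.
\end{itemize}
Dividing by $(z+2)^N$ (suitably rotated in each quadrant, so that it has no zero there) reduces to the case $N=0$. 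So in each closed quadrant we have an entire function of order $2$ that is bounded on both boundary rays.

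The main obstacle is the next step, because it is the borderline case of Phragm\'en--Lindel\"of. A quadrant has opening $\pi/2$, and order $2$ is exactly critical for that opening, so the theorem does not apply directly. I would follow the classical device (as in Levin's book). In the first quadrant, for small $\delta>0$, consider the auxiliary function
\[
G_\delta(z)=g(z)\exp\!\big(i\,\varepsilon e^{i\delta}z^2\big),
\]
with $\varepsilon>0$ chosen relative to $\delta$, working on the slightly smaller sector $\delta\le\arg z\le\pi/2$. The aim is the following bookkeeping:
\begin{itemize}
\item the multiplier has modulus at most $1$ on both boundary rays of this sector;
\item combined with the bound $e^{\pi r^2\cos^2\theta}$, it makes $G_\delta$ of order $2$ in a sector of opening strictly less than $\pi/2$, or else of order $<2$ after a further rotation.
\end{itemize}
The ray $\arg z=\delta$ is not part of the original boundary, so it needs separate treatment. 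There I would use continuity together with a second sector $0\le\arg z\le\delta$ of tiny opening, where ordinary Phragm\'en--Lindel\"of for order $2$ applies because the opening is $<\pi/2$. Letting $\delta,\varepsilon\to0$ then gives $|g(z)|\le C$ on the quadrant.

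The same argument applies in all four quadrants. Undoing the division, this gives $|g(z)|\lesssim(1+|z|)^N$ on $\mathbb{C}$, and Liouville's theorem shows that $g$ is a polynomial of degree at most $N$. Therefore $\hat f(\xi)=Q(\xi)e^{-\pi\xi^2}$ with $\deg Q\le N$. Finally, the Hermite functions $H_k(x)e^{-\pi x^2}$, which span the polynomials of degree $\le N$ times $e^{-\pi x^2}$, are eigenfunctions of the Fourier transform. Hence $f(x)=P(x)e^{-\pi x^2}$ with $\deg P=\deg Q\le N$.
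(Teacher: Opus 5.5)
The paper does not prove this theorem; it quotes it from Levin's book. The outer steps of your plan are correct: the entire extension of $\hat f$, the bound $|g(x+iy)|\lesssim(1+|z|)^Ne^{\pi x^2}$, boundedness on both axes, the reduction to $N=0$, Liouville, and the Hermite-function step. The gap is in the one step that carries the content, the borderline Phragm\'en--Lindel\"of argument, and it fails as you set it up.

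You excise a thin sector next to the \emph{real} axis and work in $\delta\le\arg z\le\pi/2$. On the new ray $\arg z=\delta$, the only information you have is the a priori bound $e^{\pi r^2\cos^2\delta}$, which is essentially $e^{\pi r^2}$. Since $|\exp(i\varepsilon e^{i\delta}z^2)|=\exp(-\varepsilon r^2\sin(2\theta+\delta))$, cancelling this bound requires $\varepsilon\sin3\delta\ge\pi\cos^2\delta$, that is, $\varepsilon\gtrsim 1/\delta$.
\begin{itemize}
\item So $\varepsilon$ cannot tend to $0$, and the resulting interior bound $e^{\varepsilon r^2\sin(2\theta+\delta)}$ is useless as $\delta\to0$.
\item On $\arg z=\pi/2$ your multiplier has modulus $e^{\varepsilon r^2\sin\delta}>1$, contrary to your claim.
\item The rescue via the sector $0\le\arg z\le\delta$ is circular. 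Phragm\'en--Lindel\"of there needs a bound on the same ray $\arg z=\delta$ that you lack, and continuity gives no control along an unbounded ray.
\end{itemize}

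The missing idea is to excise next to the \emph{imaginary} axis, where the a priori bound is almost trivial. Let $h$ be your function after the reduction to $N=0$. Then $|h|\le C$ on both axes and $|h(re^{i\theta})|\le Ce^{\pi r^2\cos^2\theta}$ in the closed first quadrant. On the ray $\arg z=\frac{\pi}{2}-\delta$ this reads $|h|\le Ce^{\pi r^2\sin^2\delta}$.

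Put $G(z)=h(z)e^{i\varepsilon z^2}$ with $\varepsilon=\frac{\pi}{2}\tan\delta$. Then $|e^{i\varepsilon z^2}|=e^{-\varepsilon r^2\sin2\theta}\le1$ in the quadrant, and it equals exactly $e^{-\pi r^2\sin^2\delta}$ on $\arg z=\frac{\pi}{2}-\delta$. Hence $|G|\le C$ on both sides of the sector $0\le\arg z\le\frac{\pi}{2}-\delta$, whose opening is strictly less than $\pi/2$. Since $G$ has order at most $2$, Phragm\'en--Lindel\"of gives $|G|\le C$ there, i.e. $|h(re^{i\theta})|\le Ce^{\varepsilon r^2\sin2\theta}$. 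Letting $\delta\to0$ (so $\varepsilon\to0$) at fixed $z$ gives $|h|\le C$ on the quadrant. Use $e^{-i\varepsilon z^2}$ in the second and fourth quadrants.

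The mechanism is that the exponent $\pi\cos^2\theta$ vanishes to second order at $\theta=\pi/2$, while $\sin2\theta$ vanishes only to first order. Near $\theta=0$ there is no such gain, which is why your choice of sector cannot work.
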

The proof of Theorem~\ref{thm:discrete-hardy} only applies to $A>1$ and yields the bound $C_A\sim (A-1)^{-3}$ as $A\to1^+$. The non-uniqueness part of Theorem~\ref{thm:kns-uniqueness} implies that the statement fails for $C_A<2$, and it clearly does not hold for $A<1$. In light of this, the following question arises.
\begin{question}[Question 24 in~\cite{ramos2025pauli}]\label{ques:ramos-sousa-24}
Regarding Theorem~\ref{thm:discrete-hardy}:
\begin{enumerate}
    \item[(i)] If $A = 1$, is there a constant $C_1 > 0$ such that, if $\Lambda, M$ satisfy~\eqref{eq:condition-discrete-hardy} and $f$ satisfies
    \[
        \sup_{\lambda\in\Lambda}|f(\lambda)|e^{\pi|\lambda|^2}+\sup_{\mu\in M}|\hat{f}(\mu)|e^{\pi|\mu|^2}<\infty,
    \]
    then $f(x) = c \, e^{-\pi x^2}$ for some $c \in \mathbb{C}$?

    \item[(ii)] What is the best constant $C_A > 0$ for $A>1$ such that if $\Lambda, M$ satisfy~\eqref{eq:condition-discrete-hardy}, then we can still conclude that $f \equiv 0$ in Theorem~\ref{thm:discrete-hardy}?
\end{enumerate}
\end{question}

We will resolve part~(ii) of Question~\ref{ques:ramos-sousa-24} by showing that Theorem~\ref{thm:discrete-hardy} holds with $C_A=2$. For $A=1$, we show that the statement in part~(i) of Question~\ref{ques:ramos-sousa-24} holds with $C_1=2$.
\section{Main results}
The driving engine of our results will be the following decay transfer principle.
\begin{theorem}\label{thm:discrete-beurling}
    Let $a,b>0$, and let $\Lambda,M\subset\mathbb{R}$ be a supercritical pair with parameters $(p,q)$. Assume that
    \begin{equation}\label{eq:discrete-beurling-condition}
        \limsup_{j\to\pm\infty}|\lambda_j|^{p-1}(\lambda_{j+1}-\lambda_j)<\frac{1}{2}\left(\frac{b}{a}\right)^\frac{1}{q}\quad\text{and}\quad\limsup_{j\to\pm\infty}|\mu_j|^{q-1}(\mu_{j+1}-\mu_j)<\frac{1}{2}\left(\frac{a}{b}\right)^\frac{1}{p}.
    \end{equation}
    If $f\in\mathcal{H}$ and
    \begin{equation}\label{eq:discrete-beurling-condition-2}
        \sup_{\lambda\in\Lambda}|f(\lambda)|(1+|\lambda|)^{-K}e^{a\pi|\lambda|^p}+\sup_{\mu\in M}|\hat{f}(\mu)|(1+|\mu|)^{-K}e^{b\pi|\mu|^q}<\infty
    \end{equation}
    for some $K>0$, then there exists a $\tilde{K}>0$ such that
    \begin{equation}\label{eq:discrete-beurling-conclusion}
        \sup_{x\in\mathbb{R}}|f(x)|(1+|x|)^{-\tilde{K}}e^{a\pi|x|^p}+\sup_{\xi\in\mathbb{R}}|\hat{f}(\xi)|(1+|\xi|)^{-\tilde{K}}e^{b\pi|\xi|^q}<\infty.
    \end{equation}
\end{theorem}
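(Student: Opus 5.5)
The plan is to deduce the global bound \eqref{eq:discrete-beurling-conclusion} from a \emph{weighted, quantitative} form of Theorem~\ref{thm:kns-uniqueness}(i), applied to auxiliary functions built from $f$.

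\textbf{Step 1: normalize $a=b$.} For $t>0$, replace $f$ by $f_t(x)=f(tx)$, so that $\hat f_t(\xi)=t^{-1}\hat f(\xi/t)$, and replace $\Lambda$ by $\Lambda/t$ and $M$ by $tM$. The weights become $e^{at^p\pi|x|^p}$ and $e^{bt^{-q}\pi|\xi|^q}$. The spacing quantities rescale by $t^{-p}$ and $t^{q}$ respectively. A direct computation shows that choosing $t$ with $at^p=bt^{-q}$ turns \eqref{eq:discrete-beurling-condition} into the symmetric condition
\[
\limsup_j|\lambda_j|^{p-1}(\lambda_{j+1}-\lambda_j)<\tfrac12 a^{\frac1p-\frac1q}\cdot(\text{const}),
\]
and similarly for $M$. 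This is still supercritical in the sense of Definition~\ref{def:density-criticality}, with room to spare. From here on I assume $a=b$, and I keep a margin $\varepsilon>0$ in the supercriticality.

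\textbf{Step 2: quantitative KNS.} The second ingredient is a stability version of the Kulikov--Nazarov--Sodin uniqueness theorem. Their proof is based on a local (windowed) analysis along the curve $|x|^{p-1}\sim|\xi|^{-(q-1)}$ in the time-frequency plane. I would revisit it to extract an estimate of the form
\[
\|g\|_{\mathcal H}\le C\Big(\sum_{\lambda\in\Lambda}(1+|\lambda|)^{N}|g(\lambda)|+\sum_{\mu\in M}(1+|\mu|)^{N}|\hat g(\mu)|\Big),
\]
valid for every $g\in\mathcal H$ and every pair $(\Lambda',M')$ that is supercritical with margin $\varepsilon/2$, with $C$ and $N$ uniform over all such pairs. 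Uniformity is the point: the estimate will be applied to perturbed pairs and perturbed functions.

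\textbf{Step 3: transfer the decay at a fixed point.} To prove $|f(x_0)|\lesssim(1+|x_0|)^{\tilde K}e^{-a\pi|x_0|^p}$, I would multiply $f$ by a weight that realizes the extremal for the Young inequality $a|x|^p/p+\dots$. Concretely, I would apply Step~2 to
\[
g(x)=f(x)\,e^{2\pi i\eta x}
\]
and its Fourier-side analogue. Here the modulation/translation $(x_0,\eta)$ is chosen on the Young-equality curve $\eta=a|x_0|^{p-1}\operatorname{sgn}x_0$, together with a Gaussian/Bargmann-type localization $\Phi$ at $(x_0,\eta)$. Pairing $g$ against $\Phi$ and using Step~2 then bounds $|f(x_0)|$ by a sum of sample terms. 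For these terms, Young's inequality
\[
|x|^p/p+|\xi|^q/q\ge x\xi
\]
shows that each is $\lesssim(1+|x_0|)^{N'}e^{-a\pi|x_0|^p}$, provided the sample bounds \eqref{eq:discrete-beurling-condition-2} hold. The Fourier side is handled symmetrically. The polynomial factors produce the loss from $K$ to $\tilde K$.

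\textbf{Main obstacle.} I expect the hardest step to be Step~2 together with its use in Step~3. The KNS argument is qualitative, and the localized weight interacts with the density only through the constant $\tfrac12$. Multiplying by $e^{a\pi|x|^p}$-type weights effectively shears the time-frequency picture by exactly the amount permitted by \eqref{eq:discrete-beurling-condition}, so the margin $\varepsilon$ must absorb all error terms.

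If a global stability estimate is out of reach, my fallback is to run the KNS contradiction argument directly. I would suppose \eqref{eq:discrete-beurling-conclusion} fails along a sequence $x_k\to\infty$, renormalize $f$ at $x_k$, and pass to a limit. The limit would be a nonzero function vanishing on a supercritical pair, contradicting Theorem~\ref{thm:kns-uniqueness}(i).
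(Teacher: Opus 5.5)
Step~1 is fine. The hypotheses are scale invariant, and with $t^{p+q}=b/a$ they become simply $\limsup_j|\lambda_j|^{p-1}(\lambda_{j+1}-\lambda_j)<\frac12$ and $\limsup_j|\mu_j|^{q-1}(\mu_{j+1}-\mu_j)<\frac12$. Steps~2--3, however, break down.

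\textbf{Step~2 is false as stated.} Supercriticality is a condition on $\limsup_{j\to\pm\infty}$, so deleting all points of $\Lambda$ and $M$ in $[-R,R]$ preserves your margin. Your uniform inequality applied to $g(x)=e^{-\pi x^2}$ would then give $\|g\|_{\mathcal H}\le C\big(\sum_{|\lambda|>R}(1+|\lambda|)^Ne^{-\pi\lambda^2}+\sum_{|\mu|>R}(1+|\mu|)^Ne^{-\pi\mu^2}\big)\to0$ as $R\to\infty$, which is absurd.

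\textbf{Step~3 fails even for a corrected, fixed-pair version.} No norm inequality of this type can produce exponential decay. Its right-hand side contains the samples near the origin, which are of unit size. Applied to $f$, or to any transform preserving the sample data (time--frequency shifts, dilations), it gives a bound on $|f(x_0)|$ that does not decay in $x_0$ at all, let alone like $e^{-a\pi|x_0|^p}$. Pairing with a wave packet $\Phi$ at $(x_0,\eta)$ only gives $|\langle g,\Phi\rangle|\le\|g\|_{L^2}\|\Phi\|_{L^2}$, so no localization is inherited. To gain the exponential factor you would have to multiply $f$ by a weight $W$ that is large near $x_0$. But then $\widehat{fW}=\hat f*\hat W$, whose values on $M$ are not controlled by the hypothesis on $\hat f|_M$. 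Invoking Young's inequality does not supply this missing mechanism.

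\textbf{The fallback has the same defect.} Making the renormalized samples tend to zero again requires multiplying by $e^{a\pi|x|^p}$, which destroys the Fourier data. No compactness is identified. Nothing in it uses the calibration between $a,b$ and the two individual densities.

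What is missing is the complex-analytic mechanism of the paper, where exponential rates are tracked through Phragm\'en--Lindel\"of indicators rather than norms.
\begin{itemize}
\item The Gelfand--Shilov lemma (Proposition~22 of Ramos--Sousa) puts $f\in\mathcal S(p,q)$, so $f,\hat f$ are entire of orders $p,q$.
\item The real-line rate $\kappa_{\hat f}$ bounds the indicator of $f$ off the axis (Claim~5.1 of Kulikov--Nazarov--Sodin).
\item After the substitution $z\mapsto z^{1/p}$, one subtracts a Beurling-type interpolant $g$ (Lemma~\ref{lem:interpolation}) on a uniformly discrete subsequence of $\{\lambda^p\}$. The small samples then become genuine zeros of $f(z^{1/p})-g(z)$.
\item Jensen's formula in a sector (Lemma~\ref{lem:jensen-counting}) turns their density into $\kappa_f>(2\alpha)^{-q}\kappa_{\hat f}$ whenever $\kappa_f<\frac a2$, and symmetrically for $\hat f$ (Lemma~\ref{lem:kappa-bound}).
\item Since $\alpha=\frac12(b/a)^{1/q}$ and $\beta=\frac12(a/b)^{1/p}$ satisfy $(2\alpha)^q(2\beta)^p=1$, the two inequalities are incompatible. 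This forces $\kappa_f\ge\frac a2$ and $\kappa_{\hat f}\ge\frac b2$, and is precisely where the density hypotheses enter.
\item Finally, the $e^{o(|x|^p)}$ loss inherent in indicators is removed by a Phragm\'en--Lindel\"of argument for $\Psi(z)=\big(f(z^{1/p})e^{a\pi z}-g(z)(1+z)^m\big)/\big(\Pi(z)(1+z)^{N+m}\big)$ in a sector, using the canonical-product asymptotics off a $C^0$-set. This is what produces the polynomial loss $\tilde K$.
\end{itemize}
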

By the non-uniqueness part of Theorem~\ref{thm:kns-uniqueness}, Theorem~\ref{thm:discrete-beurling} fails if any $\varepsilon>0$ is added to the right-hand side of either condition in~\eqref{eq:discrete-beurling-condition}. Thus, the density condition is sharp.

\begin{remark}
Suppose that $\Lambda$ and $M$ satisfy~\eqref{eq:discrete-beurling-condition} and there exist constants $c_1,c_2,c_3,c_4>0$ and complete interpolating sequences
$\Gamma_1,\Gamma_2,\Gamma_3,\Gamma_4\subset\mathbb R$ for some Paley--Wiener
space such that
\[
\Lambda\cap[0,\infty)
=
\{c_1\gamma^\frac{1}{p}:\gamma\in\Gamma_1\cap[0,\infty)\},
\qquad
\Lambda\cap(-\infty,0]
=
\{-c_2\gamma^\frac{1}{p}:\gamma\in\Gamma_2\cap[0,\infty)\},
\]
and
\[
M\cap[0,\infty)
=
\{c_3\gamma^\frac{1}{q}:\gamma\in\Gamma_3\cap[0,\infty)\},
\qquad
M\cap(-\infty,0]
=
\{-c_4\gamma^\frac{1}{q}:\gamma\in\Gamma_4\cap[0,\infty)\}.
\]
Then one may take $\tilde{K}=K$ in~\eqref{eq:discrete-beurling-conclusion}, provided that~\eqref{eq:discrete-beurling-condition-2} is replaced with
\begin{equation}\label{eq:discrete-beurling-condition-3}
    \sum_{\lambda\in\Lambda}|f(\lambda)|(1+|\lambda|)^{-K}e^{a\pi|\lambda|^p}+\sum_{\mu\in M}|\hat{f}(\mu)|(1+|\mu|)^{-K}e^{b\pi|\mu|^q}<\infty.
\end{equation}
\end{remark}

We now get the following discrete generalization of Hardy's uncertainty principle.
\begin{corollary}\label{cor:discrete-hardy}
    Let $\Lambda,M\subset\mathbb{R}$ be a supercritical pair satisfying
    \[
    \max\left(\limsup_{j\to\pm\infty}|\lambda_j|(\lambda_{j+1}-\lambda_j),\limsup_{j\to\pm\infty}|\mu_j|(\mu_{j+1}-\mu_j)\right)<\frac{1}{2}.
    \]
    If $f\in\mathcal{H}$ and
    \[
    \sup_{\lambda\in\Lambda}|f(\lambda)|(1 + |\lambda|)^{-N}e^{\pi|\lambda|^2}+\sup_{\mu\in M}|\hat{f}(\mu)|(1 + |\mu|)^{-N}e^{\pi|\mu|^2}<\infty
    \]
    for some $N\geq 0$, then $f(x) = P(x) e^{-\pi x^{2}}$, where $P$ is a polynomial with $\deg P\leq N$.
\end{corollary}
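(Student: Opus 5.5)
The plan is to obtain Corollary~\ref{cor:discrete-hardy} from Theorem~\ref{thm:discrete-beurling} by choosing parameters so that its conclusion matches the hypothesis of Hardy's uncertainty principle (Theorem~\ref{thm:hardy}). I take $p=q=2$ and $a=b=1$. Then $(b/a)^{1/q}=(a/b)^{1/p}=1$, so condition~\eqref{eq:discrete-beurling-condition} says that both $\limsup|\lambda_j|(\lambda_{j+1}-\lambda_j)$ and $\limsup|\mu_j|(\mu_{j+1}-\mu_j)$ are strictly less than $\tfrac12$. This is exactly the density assumption of the corollary. The pair is supercritical with parameters $(2,2)$, which is given, and consistent with it: taking $\overline\alpha=\overline\beta<\tfrac12$ gives $\overline\alpha^{1/2}\overline\beta^{1/2}<\tfrac12$. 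The decay hypothesis with $K=N$ is~\eqref{eq:discrete-beurling-condition-2}; if $N=0$, we use $K=1$ instead, since a weaker weight is harmless. Theorem~\ref{thm:discrete-beurling} therefore gives some $\tilde K>0$ with
\[
\sup_{x}|f(x)|(1+|x|)^{-\tilde K}e^{\pi x^2}+\sup_{\xi}|\hat f(\xi)|(1+|\xi|)^{-\tilde K}e^{\pi\xi^2}<\infty .
\]

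Next I apply Theorem~\ref{thm:hardy} with exponent $\tilde K$, using $f\in\mathcal H\subset L^2$. This gives $f(x)=P(x)e^{-\pi x^2}$ with $P$ a polynomial of degree at most $\tilde K$. The remaining issue is that $\tilde K$ may exceed $N$, so the degree bound has to be recovered from the discrete hypothesis alone.

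To get $\deg P\le N$, I evaluate on $\Lambda$. The hypothesis gives $|P(\lambda)|\lesssim(1+|\lambda|)^N$ for $\lambda\in\Lambda$. Since $\Lambda$ is unbounded (by the standing assumption), $|P(\lambda)|\sim c|\lambda|^{\deg P}$ along $\lambda\to\infty$ in $\Lambda$, and this forces $\deg P\le N$. The Fourier side gives the same conclusion and is not needed.

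The only step with real content is Theorem~\ref{thm:discrete-beurling} itself, which I take as given. Beyond that, the only care needed is the bookkeeping above: checking that $a=b=1$ turns condition~\eqref{eq:discrete-beurling-condition} into the stated $<\tfrac12$ bound, and that the possible loss from $N$ to $\tilde K$ is repaired by the final degree argument.
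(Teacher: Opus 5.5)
Your proposal is correct and follows the paper's proof: apply Theorem~\ref{thm:discrete-beurling} with $p=q=2$, $a=b=1$, then Hardy's theorem (Theorem~\ref{thm:hardy}) to get $f=Pe^{-\pi x^2}$ with $\deg P\le\tilde K$, then use the discrete hypothesis on the unbounded set $\Lambda$ to force $\deg P\le N$. Your replacement of $N=0$ by $K=1$, needed because Theorem~\ref{thm:discrete-beurling} requires $K>0$, is a small point the paper leaves implicit.
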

\begin{proof}
    By Theorem~\ref{thm:discrete-beurling}, we know that
        \[
    \sup_{x\in\mathbb{R}}|f(x)|(1+|x|)^{-\tilde K}e^{\pi|x|^2}+\sup_{\xi\in\mathbb{R}}|\hat{f}(\xi)|(1+|\xi|)^{-\tilde K}e^{\pi|\xi|^2}<\infty
    \]
    for some $\tilde K>0$. By Theorem~\ref{thm:hardy}, this implies that $f(x)=P(x)e^{-\pi x^2}$ for some polynomial $P$ with $\deg P\leq \tilde K$. In particular,
    \[
    \sup_{\lambda\in\Lambda}|f(\lambda)|(1 + |\lambda|)^{-N}e^{\pi|\lambda|^2}=\sup_{\lambda\in\Lambda}(1 + |\lambda|)^{-N}|P(\lambda)|<\infty
    \]
    by the original hypothesis. Since $\Lambda$ is unbounded, this is only possible if $P$ has degree at most $N$.
\end{proof}
\medskip
Taking $N=0$ in Corollary~\ref{cor:discrete-hardy} resolves part~(i) of Question~\ref{ques:ramos-sousa-24}. For $A>1$, the same conclusion with the stronger weight $e^{A\pi |x|^2}$ forces $P\equiv 0$, resolving part~(ii) with the optimal constant $C_A=2$. Thus the optimal constant is $C_A=2$ for all $A\geq 1$.

We can use the same strategy to get a discrete version of Morgan's uncertainty principle.
\begin{theoremx}[Morgan's uncertainty principle~\cite{MR1574180}]\label{thm:morgan}
    Let $p,q>1$ and $\frac{1}{p}+\frac{1}{q}=1$. Suppose that $f\in L^1(\mathbb{R})$ and that $|f(x)|=O(e^{-\frac{2}{p}A\pi|x|^p})$ and $|\hat{f}(\xi)|=O(e^{-\frac{2}{q}A\pi|\xi|^q})$ for some $A>|\cos(\frac{r\pi}{2})|^\frac{1}{r}$ where $r:=\min\{p,q\}$. Then $f(x)\equiv0$.
\end{theoremx}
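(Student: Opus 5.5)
We plan to prove Theorem~\ref{thm:morgan} through the growth theory of entire functions of finite order, specifically the indicator function and its trigonometric convexity. This is in the spirit of the Phragmén--Lindelöf proof of Theorem~\ref{thm:hardy}.

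\emph{Reduction.} The hypotheses are symmetric under exchanging $f$ and $\hat f$ together with $p$ and $q$, because $\hat{\hat f}(x)=f(-x)$. So we may assume $q=r=\min\{p,q\}\le 2$. The case $q=2$ gives $p=q=2$, $|\cos(\pi)|=1$ and $A>1$, so it follows directly from Theorem~\ref{thm:hardy}. Hence we may assume $1<q<2$, in which case $\cos(\frac{q\pi}{2})<0$.

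\emph{Growth of $\hat f$ off the real axis.} Since $|f(x)|\lesssim e^{-\frac{2}{p}A\pi|x|^p}$, the integral $\hat f(z)=\int f(x)e^{-2\pi ixz}\,dx$ converges for all complex $z$ and defines an entire function. It satisfies
\[
|\hat f(x+iy)|\le \int_{\mathbb{R}} e^{-\frac{2}{p}A\pi|t|^p+2\pi|t||y|}\,dt .
\]
By the Young/Legendre computation
\[
\sup_{t\ge0}\Bigl(2\pi t|y|-\tfrac{2}{p}A\pi t^p\Bigr)=\tfrac{2\pi}{q}A^{1-q}|y|^q ,
\]
and after absorbing a factor $1+\varepsilon$ to make the integral converge, $\hat f$ has order at most $q$ and finite type. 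Let $h(\theta)=\limsup_{\rho\to\infty}\rho^{-q}\log|\hat f(\rho e^{i\theta})|$ be its indicator. The bound above gives
\[
h(\pm\tfrac{\pi}{2})\le \tfrac{2\pi}{q}A^{1-q} .
\]
The hypothesis on $\hat f$ gives
\[
h(0),\,h(\pi)\le -\tfrac{2}{q}A\pi .
\]
Strictly, these hold up to an arbitrary $\varepsilon>0$, which is harmless.

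\emph{Trigonometric convexity.} Suppose $\hat f\not\equiv0$. Then $h$ is finite and $q$-trigonometrically convex: for $\theta_1<\theta<\theta_2$ with $\theta_2-\theta_1<\pi/q$,
\[
h(\theta)\sin(q(\theta_2-\theta_1))\le h(\theta_1)\sin(q(\theta_2-\theta))+h(\theta_2)\sin(q(\theta-\theta_1)).
\]
Apply this with $\theta_1=-\frac{\pi}{2}$, $\theta=0$, $\theta_2=\frac{\pi}{2}$; this is allowed since $\pi<\pi/q$ fails but $\pi/2<\pi/q$. So we apply it on a sector of opening less than $\pi/q$, namely $\theta_1=-\frac{\pi}{2}+\delta$, $\theta_2=\frac{\pi}{2}-\delta$, and let $\delta\to0$ using upper semicontinuity and the bounds near $\pm\frac{\pi}{2}$. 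Dividing by $2\sin(\frac{q\pi}{2})>0$ yields
\[
h(0)\cos(\tfrac{q\pi}{2})\le h(\tfrac{\pi}{2}).
\]
Since $\cos(\frac{q\pi}{2})<0$, the left side is at least $\frac{2}{q}A\pi|\cos(\frac{q\pi}{2})|$. Combined with the upper bound for $h(\frac{\pi}{2})$, we obtain an inequality of the form
\[
A^q\,|\cos(\tfrac{q\pi}{2})|\le 1 .
\]
The last step is to match this with the threshold on $A$ in the statement, after accounting for the normalization in which the exponents $\frac{2}{p}A\pi$ and $\frac{2}{q}A\pi$ carry the same $A$. With that threshold the inequality is violated, so $\hat f\equiv0$ and hence $f\equiv0$.

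\emph{Main obstacle.} The delicate points are twofold. The first is justifying trigonometric convexity up to the boundary rays $\pm\frac{\pi}{2}$, where the opening equals $\pi$ and is not strictly below $\pi/q$ in an arbitrary sense; I would run a direct Phragmén--Lindelöf argument on the quarter-planes with the auxiliary function $\hat f(z)e^{cz^q}$ instead. The second is bookkeeping the constants so that the threshold comes out exactly as $|\cos(\frac{r\pi}{2})|^{1/r}$. Rather than rely on the rough computation above, I would redo this carefully, optimizing over the opening angle of the sector.
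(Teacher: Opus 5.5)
The paper only quotes this result from Morgan and gives no proof, so your proposal has to stand on its own. As written, it does not reach the stated threshold. Your set-up is fine: the reduction to $q=r\le 2$, the case $q=2$ via Theorem~\ref{thm:hardy}, and the indicator bounds $h(\pm\frac{\pi}{2})\le\frac{2\pi}{q}A^{1-q}$ and $h(0),h(\pi)\le-\frac{2}{q}A\pi$ are all correct.

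The problem is the inequality you extract, $h(0)\cos(\frac{q\pi}{2})\le\frac12\bigl(h(\frac{\pi}{2})+h(-\frac{\pi}{2})\bigr)$. It gives exactly $A^q|\cos(\frac{q\pi}{2})|\le 1$, so you get a contradiction only when $A>|\cos(\frac{q\pi}{2})|^{-1/q}$, which is larger than $1$. The hypothesis $A>|\cos(\frac{q\pi}{2})|^{1/q}$ allows, for instance, $A=1$. There $A^q|\cos(\frac{q\pi}{2})|<1$ and nothing is violated. Your constants are already the right ones, so there is no normalization left to adjust, and the final ``matching'' step is false. The loss comes from putting the middle point of the convexity inequality at $\theta=0$: that uses the decay of $\hat f$ on only one half-line. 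Separately, your justification of that inequality fails. An opening $\pi-2\delta<\frac{\pi}{q}$ forces $\delta>\frac{\pi}{2p}$, so you cannot let $\delta\to0$.

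The right tool is the three-point inequality in the form that only requires both sub-gaps $\theta-\theta_1$ and $\theta_2-\theta$ to be less than $\frac{\pi}{q}$, with no condition on $\theta_2-\theta_1$. To get it, take a supporting sinusoid at $\theta$, namely $h(\phi)\ge h(\theta)\cos q(\phi-\theta)+\frac{s}{q}\sin q(\phi-\theta)$ for $|\phi-\theta|<\frac{\pi}{q}$. Evaluate it at $\phi=\theta_1$ and $\phi=\theta_2$, multiply by $\sin q(\theta_2-\theta)$ and $\sin q(\theta-\theta_1)$ respectively, and add. Then apply it with the middle point on the imaginary axis, $\theta_1=0$, $\theta=\frac{\pi}{2}$, $\theta_2=\pi$:
\[
h(\tfrac{\pi}{2})\sin(q\pi)\le\bigl(h(0)+h(\pi)\bigr)\sin(\tfrac{q\pi}{2}).
\]
Since $\sin(q\pi)=2\sin(\frac{q\pi}{2})\cos(\frac{q\pi}{2})$ with $\cos(\frac{q\pi}{2})<0$, this says $|\cos(\frac{q\pi}{2})|\,h(\frac{\pi}{2})\ge-\frac12\bigl(h(0)+h(\pi)\bigr)\ge\frac{2}{q}A\pi$. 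Combined with $h(\frac{\pi}{2})\le\frac{2\pi}{q}A^{1-q}$, it gives $A^q\le|\cos(\frac{q\pi}{2})|$, which is precisely the threshold in the statement.

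This is sharp for the indicator method. Take $h(\theta)=\frac{2\pi}{q}A^{1-q}\cos q(|\theta|-\frac{\pi}{2})$ with $A^q=|\cos(\frac{q\pi}{2})|$. It is $q$-trigonometrically convex, lies below $\frac{2\pi}{q}A^{1-q}|\sin\theta|^q$, and has $h(0)=h(\pi)=-\frac{2}{q}A\pi$. It also satisfies your inequality centered at $0$ with room to spare.
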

Applying Morgan's uncertainty principle together with Theorem~\ref{thm:discrete-beurling} yields the following result.
\begin{corollary}\label{cor:discrete-beurling}
    Let $\Lambda,M\subset\mathbb{R}$ be a supercritical pair satisfying
    \begin{align*}
        \limsup_{j\to\pm\infty}|\lambda_j|^{p-1}(\lambda_{j+1}-\lambda_j)<\frac{1}{2}\left(\frac{p}{q}\right)^{\frac{1}{q}}\quad\text{and}\quad\limsup_{j\to\pm\infty}|\mu_j|^{q-1}(\mu_{j+1}-\mu_j)<\frac{1}{2}\left(\frac{q}{p}\right)^{\frac{1}{p}}
    \end{align*}
    where $p,q>1$ and $\frac{1}{p}+\frac{1}{q}=1$. If $A>|\cos(\frac{r\pi}{2})|^\frac{1}{r}$ where $r:=\min\{p,q\}$, then there are no non-zero functions $f\in\mathcal{H}$ such that 
    \[
    \sup_{\lambda\in\Lambda}|f(\lambda)|e^{\frac{2}{p}A\pi|\lambda|^p}+\sup_{\mu\in M}|\hat{f}(\mu)|e^{\frac{2}{q}A\pi|\mu|^q}<\infty.
    \]
\end{corollary}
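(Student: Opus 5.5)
We apply Theorem~\ref{thm:discrete-beurling} with $a=\frac{2}{p}A$, $b=\frac{2}{q}A$, then Morgan's uncertainty principle (Theorem~\ref{thm:morgan}). First we match the density conditions. With these choices $b/a=p/q$, so $(b/a)^{1/q}=(p/q)^{1/q}$ and $(a/b)^{1/p}=(q/p)^{1/p}$. These are exactly the thresholds in the hypothesis of Corollary~\ref{cor:discrete-beurling}, so \eqref{eq:discrete-beurling-condition} holds. The pair is supercritical by assumption. A quick check that the assumed bounds are consistent with supercriticality: taking $\overline\alpha,\overline\beta$ at the thresholds gives
\[
\overline\alpha^{1/p}\overline\beta^{1/q}=\tfrac12 (p/q)^{1/(pq)}(q/p)^{1/(pq)}=\tfrac12,
\]
and strict inequality gives strictly less. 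So supercriticality comes for free, but it is assumed anyway.

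Next, the hypothesis
\[
\sup_{\lambda\in\Lambda}|f(\lambda)|e^{a\pi|\lambda|^p}+\sup_{\mu\in M}|\hat f(\mu)|e^{b\pi|\mu|^q}<\infty
\]
trivially implies \eqref{eq:discrete-beurling-condition-2} for any $K>0$, since $(1+|x|)^{-K}\le 1$. Theorem~\ref{thm:discrete-beurling} then gives some $\tilde K$ with
\[
|f(x)|\lesssim(1+|x|)^{\tilde K}e^{-\frac{2}{p}A\pi|x|^p}, \qquad |\hat f(\xi)|\lesssim (1+|\xi|)^{\tilde K}e^{-\frac{2}{q}A\pi|\xi|^q}.
\]

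The only subtlety is the polynomial loss $(1+|x|)^{\tilde K}$, which Morgan's theorem as stated does not allow. We absorb it by shrinking $A$. Since the condition $A>|\cos(\frac{r\pi}{2})|^{1/r}$ is strict, pick $A'$ with $|\cos(\frac{r\pi}{2})|^{1/r}<A'<A$. Then
\[
(1+|x|)^{\tilde K}e^{-\frac{2}{p}(A-A')\pi|x|^p}
\]
is bounded, so $|f(x)|=O(e^{-\frac{2}{p}A'\pi|x|^p})$, and likewise $|\hat f(\xi)|=O(e^{-\frac{2}{q}A'\pi|\xi|^q})$.

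Finally, $f\in L^1$: it is continuous (since $\hat f\in L^1$ by the decay just obtained) and decays faster than any polynomial. Theorem~\ref{thm:morgan} with $A'$ then yields $f\equiv0$.

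Essentially all the difficulty lies in Theorem~\ref{thm:discrete-beurling}, which we take as given. Within this corollary, the only step needing care is absorbing the polynomial factor via the strict inequality on $A$, together with the parameter bookkeeping above.
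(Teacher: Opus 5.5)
Your proposal is correct and is the argument the paper intends. The paper only says the corollary follows from Theorem~\ref{thm:discrete-beurling} and Theorem~\ref{thm:morgan}: take $a=\frac{2}{p}A$ and $b=\frac{2}{q}A$, so that the density thresholds in \eqref{eq:discrete-beurling-condition} match the hypothesis exactly; then absorb the polynomial loss $(1+|x|)^{\tilde K}$ by passing to some $A'\in\bigl(|\cos(\frac{r\pi}{2})|^{1/r},A\bigr)$ before invoking Morgan's theorem. Your bookkeeping (the parameter matching, the $L^1$ check, and the observation that supercriticality is automatic) fills in these unstated steps correctly.
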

\begin{remark}
    Theorem~\ref{thm:discrete-beurling} will also be used as a key input in a forthcoming work~\cite{lysen-discrete-pauli} answering Question~23 of Ramos and Sousa~\cite{ramos2025pauli} on discrete
    Pauli pairs.
\end{remark}

We give a short overview of the proof strategy of Theorem~\ref{thm:discrete-beurling}. First, the assumptions imply that $f$ belongs to a Gelfand--Shilov class, so both $f$ and $\hat f$ extend to entire functions with finite order. Second, we build an interpolating correction term which matches $f$ on a suitable subsequence; after subtracting this correction, the prescribed small values may be treated as zeros while the Phragm\'en--Lindel\"of indicator remains controlled. Third, a sectorial zero-counting argument gives inequalities between the real-line decay indicators of $f$ and $\hat f$. The density assumptions cause a contradiction between these inequalities unless $f$ and $\hat{f}$ have the desired real-line decay. Finally, a Phragm\'en--Lindel\"of argument converts this into the global polynomially weighted bounds.
\section{Auxiliary results on entire functions}
This section collects the entire-function tools needed in the proof.

We say that $f$ belongs to the Gelfand--Shilov space $\mathcal{S}(p,q)$ if
\[
\sup_{x\in\mathbb{R}}|f(x)|e^{c|x|^p}+\sup_{\xi\in\mathbb{R}}|\hat{f}(\xi)|e^{c|\xi|^q}<\infty
\]
for some $c>0$~\cite{gelfandshilovfourier, gelfandshilov}. By standard estimates of the Fourier transform, if $f\in\mathcal{S}(p,q)$, where $\frac{1}{p}+\frac{1}{q}=1$ and $1<p,q<\infty$, then $f$ extends to an entire function of order $p$ and $\hat{f}$ extends to an entire function of order $q$.

We will use the following lemma to show that $f$ must belong to a Gelfand--Shilov class under the assumptions of Theorem~\ref{thm:discrete-beurling}. This allows us to study the problem using results from entire function theory.
\begin{lemmax}[Proposition~22 in~\cite{ramos2025pauli}]\label{lem:gelfand-shilov}
    Let $(\Lambda, M)$ be a supercritical pair with parameters $(p,q)$. Suppose also that $f$ is in $\mathcal{H}$ and satisfies
    \[
    \sup_{\lambda\in\Lambda}|f(\lambda)|e^{a|\lambda|^p}+\sup_{\mu\in M}|\hat{f}(\mu)|e^{a|\mu|^q}<\infty
    \]
    for some $a>0$. Then there exists $c_a>0$ such that
    \[
    \sup_{x\in\mathbb{R}}\left(|f(x)|e^{c_a|x|^p}+|\hat{f}(x)|e^{c_a|x|^q}\right)<\infty.
    \]
\end{lemmax}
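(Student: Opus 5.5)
The plan is to prove a \emph{weighted stability inequality} for supercritical pairs and then let the weight parameter be small. The starting point is the quantitative content behind Theorem~\ref{thm:kns-uniqueness}(i). The Kulikov--Nazarov--Sodin argument for supercritical pairs does not just give uniqueness: for $R$ large, it controls the mass of $f$ in the ``tail'' regions $\{|x|>R\}$ and $\{|\xi|>R^{p-1}\}$ by the samples there plus a small multiple of the mass further out. Concretely, set
\[
T(R):=\int_{|x|>R}|f(x)|^2\,dx+\int_{|\xi|>R^{p-1}}|\hat f(\xi)|^2\,d\xi .
\]
I would extract from the KNS argument an inequality of the form
\[
T(R)\le C\Big(\sum_{\lambda\in\Lambda,\,|\lambda|>R/2}|f(\lambda)|^2(\lambda_{j+1}-\lambda_j)+\sum_{\mu\in M,\,|\mu|>R^{p-1}/2}|\hat f(\mu)|^2(\mu_{j+1}-\mu_j)\Big)+\theta\, T(\rho R),
\]
with $\theta<1$ and $\rho>1$ fixed. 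Supercriticality enters exactly here: it makes the sampling in each dyadic tail window dense enough relative to the time--frequency ``area'' of that window. By the hypothesis, the sample sums are $O(e^{-2a(R/2)^p})$.

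Iterating this once gives only polynomial decay, so the key step is a \emph{multiplicative} improvement. On the scale where the relevant time--frequency region has area $\asymp R^p$, the KNS estimate is really a Bernstein/Remez-type inequality for functions concentrated in a box of that area. Such inequalities lose at most a factor $e^{CR^p}$ with a small constant $C$ when the density margin $\tfrac12-\overline\alpha^{1/p}\overline\beta^{1/q}$ is fixed. So I would aim for
\[
T(R)\le e^{\varepsilon R^p}\,e^{-2a(R/2)^p}+\theta^{N(R)}T(R_0),
\]
where $N(R)\asymp R^p$ is the number of disjoint sub-windows between $R_0$ and $R$ on which the local estimate can be applied. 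This uses the fact that the local gap condition is uniform in every window of length $\asymp R^{1-p}$. Choosing $\varepsilon<2a2^{-p}$ then gives $T(R)\lesssim e^{-2c R^p}$ for some $c>0$. The symmetric statement for $\hat f$ follows from $q-1=1/(p-1)$.

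To pass from $L^2$ tail bounds to pointwise bounds, I would use $f\in\mathcal H$. Since $\hat f\in L^1$ and $\hat f$ has tail $e^{-cR^{q}}$ beyond $R^{p-1}$, the function $f$ is smooth enough at frequency scale $R^{p-1}$. A local Sobolev/Bernstein estimate on $[R,2R]$ then converts $T(R)\lesssim e^{-2cR^p}$ into $|f(x)|\lesssim e^{-c'|x|^p}$, and likewise $|\hat f(\xi)|\lesssim e^{-c'|\xi|^q}$. This yields the claim with $c_a=c'$.

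The main obstacle is the second step: making the KNS uniqueness argument quantitative with a loss of only $e^{\varepsilon R^p}$ per scale and a geometric gain $\theta^{N(R)}$. The difficulty is that exponential weights do not commute with the Fourier transform. I would try first to localize with Gaussian-type windows adapted to each dyadic block: $\phi_R(x)=e^{-\pi (x-x_0)^2/\ell^2}$ with $\ell\asymp R^{1-p/2}$. Modulating and windowing in this way keeps both $\phi_R f$ and its Fourier transform essentially supported in a box of area $\asymp 1$ per sample. The KNS local lemma can then be applied box by box, with the error terms summed over the $\asymp R^p$ boxes.
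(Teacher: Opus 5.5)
The paper does not prove this lemma; it cites Ramos--Sousa (Proposition~22) and notes only that the proof uses Poincar\'e--Wirtinger estimates as in Kulikov--Nazarov--Sodin. Your plan (make the KNS argument quantitative, carry the samples as errors, then pass from $L^2$ tails to pointwise bounds) is in that spirit. Your last step is fine: on unit intervals $J$ use $\|f\|_{L^\infty(J)}^2\lesssim\|f\|_{L^2(J)}\bigl(\|f\|_{L^2(J)}+\|f'\|_{L^2(\mathbb R)}\bigr)$, with $f'\in L^2$ because $f\in\mathcal H$.

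The gap is that the decisive estimate is asserted, not derived. Write $X(R)$ for your sample sum. Your recursion $T(R)\le CX(R)+\theta T(\rho R)$ with $\theta<1<\rho$ gives $T(R)\le\frac{C}{1-\theta}X(R)$ at once by summing a geometric series, since $T$ is bounded and $X$ is non-increasing. That is already Gelfand--Shilov decay, so as written the recursion is as strong as the lemma, and nothing in your proposal proves it. Your remark that it yields only polynomial decay suggests you meant $\theta T(R/\rho)$. In that reading, the upgrade from $\theta^{\log_\rho R}$ to $\theta^{N(R)}$ with $N(R)\asymp R^p$ is exactly the content of the lemma, and you leave it open.

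The mechanism you sketch for that upgrade does not work, for three reasons.
\begin{itemize}
\item Windowing destroys the frequency-side hypothesis. The value $\widehat{\phi_R f}(\mu)=(\hat\phi_R*\hat f)(\mu)$ depends on $\hat f$ in a neighbourhood of $\mu$, which you do not control. So there is no two-sided local lemma to apply box by box.
\item A one-sided Remez or Bernstein bound cannot replace it. Supercriticality is the joint condition $\overline\alpha^{1/p}\overline\beta^{1/q}<\frac12$, so neither $\Lambda$ nor $M$ need be a sampling set at the relevant scale on its own.
\item Even where such a bound holds, its loss $e^{\varepsilon R^p}$ has $\varepsilon$ fixed by the geometry. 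You need $\varepsilon<2^{1-p}a$ for arbitrary $a>0$, which it cannot give.
\end{itemize}

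The missing observation is that the sample values cost nothing exponential. On a gap $I=[\lambda_j,\lambda_{j+1}]$, let $\ell_j$ be the linear interpolant of $f(\lambda_j)$ and $f(\lambda_{j+1})$, and apply Wirtinger's inequality to $f-\ell_j$. Since $(f-\ell_j)'=f'-\frac{1}{|I|}\int_I f'$, you get
\[
\|f\|_{L^2(I)}\le\frac{|I|}{\pi}\|f'\|_{L^2(I)}+|I|^{1/2}\max\{|f(\lambda_j)|,|f(\lambda_{j+1})|\},
\]
and likewise for $\hat f$ on the gaps of $M$. With this, a KNS-type Poincar\'e--Wirtinger tail iteration can be run with the additive errors $O(e^{-2a|\lambda|^p})$ and $O(e^{-2a|\mu|^q})$ carried along. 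That iteration couples $\{|x|>R\}$ with $\{|\xi|\gtrsim R^{p-1}\}$, and supercriticality makes the time and frequency Poincar\'e gains at these matched scales combine into a contraction. It yields $e^{-c_a|x|^p}$ decay with $c_a$ depending on $a$ and the density margin, without any exponential loss on the samples.
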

The proof of Lemma~\ref{lem:gelfand-shilov} uses Poincar\'e--Wirtinger type estimates similar to those found in~\cite{kulikov2025,lysen2025criticalasymmetricfourieruniqueness}.

Using methods from the construction of interpolating functions in the Paley--Wiener space, we can treat the points where $|f|$ and $|\hat{f}|$ are small as zeros after subtracting suitable interpolating functions.

It is well known in entire function theory that the density of zeros is closely related to the asymptotic growth of the function. This can be represented using the Phragm\'en--Lindel\"of indicator of the function. The Phragm\'en--Lindel\"of indicator $h_f$ of an entire function $f$ of order $\rho>0$ is defined as
\[
h_f(\theta):=\limsup_{r\to\infty}\frac{\log|f(re^{i\theta})|}{r^\rho}.
\]
Since our functions $f$ and $\hat{f}$ extend to entire functions, we can study their Phragm\'en--Lindel\"of indicators.

Our interpolating function will be constructed using a canonical product. The canonical product for some sequence $(a_n)_{n=1}^\infty\subset\mathbb{C}$ is defined as
\[
\Pi(z):=\prod_{n=1}^\infty G\left(\frac{z}{a_n},k\right)
\]
where $G$ denotes the Hadamard canonical factors
\[
G(z,k):=\begin{cases}
    1-z,\quad&k=0,\\
    (1-z)e^{z+\frac{z^2}{2}+\cdots+\frac{z^k}{k}},\quad&k>0
\end{cases}
\]
and $k$ is the smallest non-negative integer such that
\[
\sum_{n=1}^\infty\frac{1}{|a_n|^{k+1}}<\infty.
\]
We now recall estimates for canonical products where $k<\rho<k+1$.
\begin{definitionx}
    A set of disks $(C_j)$ in the complex plane is called a $C^0$-set if
    \[
    \lim_{R\to\infty}\frac{1}{R}\sum_{|z_j|<R} r_j=0
    \]
    where $z_j$ are the centers of $(C_j)$, and $r_j$ are their radii.
\end{definitionx}
\begin{theoremx}[Theorem 7, Chapter 12~\cite{levinentirefunctions}]\label{thm:canonical-product-estimate}
Let $\Gamma:=(\gamma_n)_{n=1}^\infty\subset[0,\infty)$, and let $\Pi$ be the canonical product of $\Gamma$. Assume that $\rho$ is not an integer and that the density
\[
\Delta:=\lim_{r \to \infty} \frac{|\Gamma\cap[0,r]|}{r^{\rho}},
\]
exists. Then there exists a $C^0$-set of disks $(C_j)$ outside which the asymptotic relation
\begin{equation*}
\log \left| \Pi\!\left(r e^{i\theta}\right) \right|
= \frac{\pi \Delta}{\sin \pi \rho} \, r^{\rho} \cos \rho(\theta - \pi) + o(r^{\rho}), \quad r \to \infty,
\end{equation*}
holds uniformly with respect to $\theta\in[0,2\pi]$.
\end{theoremx}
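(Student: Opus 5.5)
The plan is to compare $\log|\Pi|$ with the logarithm of a ``model'' canonical product whose zero counting function is exactly $\Delta t^{\rho}$, and to show that the difference is $o(r^\rho)$ outside a $C^0$-set. Write $n(t):=|\Gamma\cap[0,t]|$, so that $n(t)=\Delta t^\rho+o(t^\rho)$. Since $\rho$ is not an integer, the genus is $k=\lfloor\rho\rfloor$. For $z=re^{i\theta}$ we have
\[
\log|\Pi(z)|=\int_0^\infty \log\Bigl|G\Bigl(\frac{z}{t},k\Bigr)\Bigr|\,dn(t).
\]

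\textbf{Step 1: the model integral.} First I would compute
\[
I(z):=\int_0^\infty \log\Bigl|G\Bigl(\frac{z}{t},k\Bigr)\Bigr|\,d(\Delta t^\rho).
\]
Substituting $t=rs$ gives $I(z)=\Delta r^\rho\, J(\theta)$ with $J$ independent of $r$. To evaluate $J$, write $\log G(z/t,k)=-\sum_{m>k}(z/t)^m/m$ for $|z|<t$ and use the analytic continuation in $\theta$. Equivalently, use the classical Mellin-type identity
\[
\int_0^\infty s^{\rho-1}\Bigl(\log(1+w/s)-\sum_{m=1}^{k}\frac{(-1)^{m-1}}{m}\Bigl(\frac{w}{s}\Bigr)^m\Bigr)\,ds=\frac{\pi}{\rho\sin\pi\rho}\,w^\rho,
\]
valid for $|\arg w|<\pi$ and $k<\rho<k+1$, with $w=re^{i(\theta-\pi)}$. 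Taking real parts and multiplying by $\rho$ (coming from $d(s^\rho)=\rho s^{\rho-1}ds$) gives
\[
I(re^{i\theta})=\frac{\pi\Delta}{\sin\pi\rho}\,r^\rho\cos\rho(\theta-\pi).
\]
By continuity this holds for all $\theta\in[0,2\pi]$, including the positive axis, where the integrand has only an integrable logarithmic singularity.

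\textbf{Step 2: the error term.} Set $\varepsilon(t):=n(t)-\Delta t^\rho=o(t^\rho)$, and split the $t$-integral into three ranges: $t<\delta r$, $t>r/\delta$, and $\delta r\le t\le r/\delta$, with $\delta>0$ small.

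For $t<\delta r$ and $t>r/\delta$, integrate by parts against $d\varepsilon(t)$. The derivative of $\log|G(z/t,k)|$ in $t$ is bounded by $C\,|z/t|^{k}/t$ for small $t$ and by $C\,|z/t|^{k+1}/t$ for large $t$. Because $k<\rho<k+1$, the resulting integrals are
\[
o(1)\cdot r^\rho\int_0^{\delta}s^{\rho-k-1}\,ds\qquad\text{and}\qquad o(1)\cdot r^\rho\int_{1/\delta}^\infty s^{\rho-k-2}\,ds,
\]
so these ranges contribute $o(r^\rho)$ uniformly in $\theta$. The same holds for the model measure restricted to these ranges.

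In the middle range, the Hadamard exponential factors are smooth and harmless, and they contribute $o(r^\rho)$ after integrating by parts. The remaining piece is
\[
\int_{\delta r}^{r/\delta}\log|1-z/t|\,d\varepsilon(t).
\]
When $\theta$ stays at a fixed positive distance from $0$, the function $\log|1-z/t|$ is smooth in $t$ with derivative of size $O(1/t)$ on this range. Integration by parts then gives $o(r^\rho)$, uniformly on $\eta\le\theta\le 2\pi-\eta$.

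\textbf{Step 3: the main obstacle, the neighbourhood of the positive ray.} Near $\theta=0$ the points $z$ approach the zeros, and $\log|1-z/t|$ is singular. I would handle this range as follows.

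First, bound the upper side trivially: $\log|1-z/t|\le \log(1+1/\delta)$, so the positive part of the integral is harmless.

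Second, for the lower bound, write $\sum_{\delta r\le\gamma_n\le r/\delta}\log|z-\gamma_n|$ and apply Cartan's lemma (the Boutroux--Cartan estimate) with parameter $H=\eta_R R$ on each dyadic annulus $R\le|z|\le 2R$, where $\eta_R\to0$ slowly. This shows that
\[
\sum_{n}\log\frac{|z-\gamma_n|}{\gamma_n}\ge -\,n(2R/\delta)\,\log\frac{C}{\eta_R}
\]
outside disks whose radii sum to at most $C\eta_R R$. Since there are $O(R^\rho)$ zeros in the relevant range, I would choose $\eta_R$ tending to $0$ slowly enough that $\log(1/\eta_R)\,R^\rho$ is still small after weighting. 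To control the local contribution honestly, I would use the finer version of Cartan's lemma: apply it only to zeros in $|t-r|<\eta r$, whose number is $n(r+\eta r)-n(r-\eta r)=O(\eta r^\rho)+o(r^\rho)$. Since $\eta\log(1/\eta)\to0$, this yields an $o(r^\rho)$ loss.

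Third, collect the exceptional disks over all dyadic annuli. Their radii satisfy $\sum_{|z_j|<R}r_j=o(R)$, so they form a $C^0$-set.

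\textbf{Step 4: conclusion.} Outside this $C^0$-set, letting $\delta\to0$ and $\eta\to0$ along with $r\to\infty$ in a diagonal fashion, the three ranges together give
\[
\log|\Pi(re^{i\theta})|=I(re^{i\theta})+o(r^\rho)
\]
uniformly in $\theta$. Combined with Step 1, this is the asserted asymptotic relation.
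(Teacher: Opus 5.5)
The paper gives no proof of this statement: it is quoted from Levin's book and used as a black box. Your outline is the classical argument for it, and it is correct in substance. It has three parts: exact evaluation of the model integral for the counting function $\Delta t^\rho$ via the Mellin identity, integration by parts against $\varepsilon(t)=n(t)-\Delta t^\rho$ wherever the kernel is regular, and a Boutroux--Cartan lower bound for the zeros in a short window around $|z|$. The Mellin identity and the tail estimates check out for every $k<\rho<k+1$.

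\textbf{Cartan's lemma needs tightening.} The window $|t-r|<\eta r$ moves with $z$, so as written your exceptional disks depend on the point of evaluation. Apply the lemma to fixed point sets instead:
\begin{itemize}
\item Split each annulus $R\le|z|\le2R$ into $O(1/\eta)$ sub-annuli of width $\eta R$. For each one, use the zeros within distance $\eta R$ of it, and take $H=\eta^2R$.
\item Put $\omega(R):=\sup_{t\ge R/2}|\varepsilon(t)|t^{-\rho}$. Then the Cartan loss is $O\bigl((\eta+\omega(R))\log(1/\eta)\bigr)R^\rho$.
\item The disks have total radius $O(\eta R)$ per dyadic annulus. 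They form a $C^0$-set once $\eta=\eta(R)\to0$ slowly enough that $\omega(R)\log(1/\eta(R))\to0$.
\item For $z$ in a given sub-annulus, the remaining middle-range zeros satisfy $|t-|z||\ge\eta R$. There the kernel has $t$-derivative $O(1/|t-|z||)$, so integration by parts costs only $O(\omega(R)\log(1/\eta))R^\rho$.
\item The trivial upper bound on $\log|1-z/t|$ is needed only for the window zeros, whose number is $O\bigl((\eta+\omega(R))R^\rho\bigr)$. It is not needed for the whole middle range, where it would give an $O(R^\rho)$ error.
\end{itemize}
Your intermediate remark that $\log(1/\eta_R)R^\rho$ can be made small after weighting is false. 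Drop it, since the windowed version replaces it.

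\textbf{The genus.} $k=\lfloor\rho\rfloor$ follows from $\Delta>0$, not from $\rho\notin\mathbb{Z}$. If $\Delta=0$ and the minimal genus is smaller, the product has order at most $\lfloor\rho\rfloor<\rho$. The claim then reduces to $\log|\Pi|=o(r^\rho)$ off a $C^0$-set, which follows from the standard growth and minimum-modulus bounds.
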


We can use Theorem~\ref{thm:canonical-product-estimate} to estimate canonical products of order $1$ with real zeros that are symmetric about the origin.
\begin{lemma}\label{lem:canonical-product-estimate}
    If $\Gamma:=(\gamma_n)_{n=1}^\infty$ is a sequence of positive numbers such that $\Delta:=\displaystyle{\lim_{n\to\infty}\frac{n}{\gamma_n}}$, and if
    \[
    \Pi(z):=\prod_{n=1}^\infty\left(1-\frac{z^2}{\gamma_n^2}\right),
    \]
    then there exists a $C^0$-set of disks $(D_j)$, centered at the zeros $\pm\gamma_j$, outside of which the relation
    \[
    \log|\Pi(re^{i\theta})|=\Delta\pi r|\sin(\theta)|+o(r)\quad\text{as $r\to\infty$},
    \]
    holds uniformly with respect to $\theta\in[0,2\pi]$.
\end{lemma}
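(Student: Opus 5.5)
The plan is to reduce Lemma~\ref{lem:canonical-product-estimate} to Theorem~\ref{thm:canonical-product-estimate} through the substitution $w=z^2$. Define
\[
Q(w):=\prod_{n=1}^\infty\Bigl(1-\frac{w}{\gamma_n^2}\Bigr),
\]
so that $\Pi(z)=Q(z^2)$. The hypothesis $n/\gamma_n\to\Delta$ gives $|\{n:\gamma_n^2\le s\}|=\Delta s^{1/2}+o(s^{1/2})$. The sequence $\Gamma':=(\gamma_n^2)\subset[0,\infty)$ therefore has density $\Delta$ with respect to the non-integer order $\rho=1/2$. Since $\sum\gamma_n^{-2}<\infty$ but $\sum\gamma_n^{-1}=\infty$ (when $\Delta>0$), the genus is $k=0$, and $Q$ is exactly the canonical product of $\Gamma'$. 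The degenerate case $\Delta=0$ still fits the theorem's hypotheses with density $0$, provided one checks convergence; since $\sum\gamma_n^{-2}<\infty$ follows from $\gamma_n/n\to\infty$, the genus is at most $1$. Genus $1$ could occur only if $\sum\gamma_n^{-2}=\infty$, which cannot happen, so $k=0$ in all cases.

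Theorem~\ref{thm:canonical-product-estimate} with $\rho=1/2$ and $\sin(\pi/2)=1$ then gives a $C^0$-set of disks $(C_j)$ in the $w$-plane outside which
\[
\log|Q(se^{i\phi})|=\pi\Delta s^{1/2}\cos\tfrac12(\phi-\pi)+o(s^{1/2}),\qquad \phi\in[0,2\pi].
\]
Put $w=z^2$ with $z=re^{i\theta}$, so $s=r^2$. Take $\phi\equiv2\theta \pmod{2\pi}$ in $[0,2\pi]$. Then $\tfrac12(\phi-\pi)\equiv\theta-\tfrac{\pi}{2}\pmod{\pi}$, and because $\phi/2\in[0,\pi]$ we get $\cos\tfrac12(\phi-\pi)=\sin(\phi/2)=|\sin\theta|$. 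This yields $\log|\Pi(re^{i\theta})|=\pi\Delta r|\sin\theta|+o(r)$ uniformly in $\theta$, for all $z$ with $z^2\notin\bigcup C_j$.

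It remains to transfer the exceptional set. This is the only real obstacle, because the preimage of a disk under $z\mapsto z^2$ is not a disk, and the lemma requires the disks to be centered at $\pm\gamma_j$. I would handle it in three steps.

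First, show that the disks can be taken near the zeros. The $o(s^{1/2})$ relation only fails near zeros of $Q$, via the standard Cartan/Boutroux estimate behind the theorem. More simply, disks $C_j$ meeting no neighborhood of the zero set can be discarded by a maximum/minimum modulus argument: $\log|Q|$ is harmonic there, and the right-hand side is continuous. So I would assume each $C_j$ is centered at some $\gamma_j^2$ with radius $\rho_j$, enlarging each radius at most by a factor of two if necessary.

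Second, pull back. For $|w-\gamma_j^2|<\rho_j$ we have $|z\mp\gamma_j|\cdot|z\pm\gamma_j|<\rho_j$. Whenever $\rho_j\le\gamma_j^2$, this forces $z$ into the disk $D_j^\pm$ centered at $\pm\gamma_j$ of radius $r_j':=\rho_j/\gamma_j$, up to a constant factor. Disks with $\rho_j>\gamma_j^2$ are handled by a single disk centered at $\pm\gamma_j$ of radius $C\sqrt{\rho_j}$, which covers their preimage.

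Third, verify the $C^0$ condition. Set $\epsilon(S):=S^{-1}\sum_{\gamma_j^2<S}\rho_j\to0$. Summing $\rho_j/\gamma_j$ over $\gamma_j<R$ by dyadic blocks $\gamma_j\in[2^m,2^{m+1})$ gives
\[
\sum_{\gamma_j<R}\frac{\rho_j}{\gamma_j}\le\sum_{2^m<R}2^{-m}\,\epsilon(4^{m+1})\,4^{m+1}=o(R).
\]
The large disks satisfy $\sqrt{\rho_j}\le\rho_j/\gamma_j$, so they obey the same bound. The pulled-back family is therefore a $C^0$-set centered at $\pm\gamma_j$, which completes the proof.
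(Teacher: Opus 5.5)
Your proposal is correct and follows the paper's proof. Both write $\Pi(z)=Q(z^2)$, apply Theorem~\ref{thm:canonical-product-estimate} to the genus-zero product $Q$ with $\rho=\tfrac12$, and use $\cos\tfrac12(\phi-\pi)=\sin\tfrac{\phi}{2}=|\sin\theta|$; your three-step transfer of the exceptional set (re-centering at zeros, pulling back to disks of radius about $\rho_j/\gamma_j$ at $\pm\gamma_j$, and the dyadic $C^0$ check) supplies details the paper only asserts, though the max/min-modulus discarding in step one should be applied to connected components of the union of disks (which have diameter $o(|w|)$) rather than to individual disks.
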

\begin{proof}
    The sequence $(\gamma_n^2)_{n=1}^\infty$ has density $\Delta$ with respect to the order $\rho=\frac{1}{2}$, since
    \[
    \lim_{n\to\infty}\frac{n}{(\gamma_n^2)^\rho}=\lim_{n\to\infty}\frac{n}{\gamma_n}=\Delta.
    \]
    Let
     \[
    \Phi(w):=\prod_{n=1}^\infty\left(1-\frac{w}{\gamma_n^2}\right).
    \]
    Since $0<\rho<1$, the canonical factors have genus $0$, and $\Phi$ is the canonical product associated with $(\gamma_n^2)$. Applying Theorem~\ref{thm:canonical-product-estimate} to $\Phi$ gives
    \[
    \log|\Phi(Re^{i\phi})|=\pi\Delta R^{1/2}\cos\left(\frac{\phi-\pi}{2}\right)+o(R^{1/2})
    \]
    uniformly in $\phi$, outside a $C^0$-set. Since $\Pi(z)=\Phi(z^2)$, putting $z=re^{i\theta}$ gives $R=r^2$ and $\phi=2\theta$ modulo $2\pi$. Therefore
    \[
    \log|\Pi(re^{i\theta})|=\Delta\pi r|\sin(\theta)|+o(r)\quad\text{as $r\to\infty$},
    \]
    uniformly in $\theta\in[0,2\pi]$, outside the corresponding exceptional set in the complex plane. This exceptional set is again a $C^0$-set and may be taken to consist of disks centered at the zeros $\pm\gamma_j$.
\end{proof}
\section{Constructing the correction}
Our goal in this section is to construct an interpolating function $g$ which matches $f$ on a certain discrete set while having a suitable Phragm\'en--Lindel\"of indicator. We can then later take the difference $f-g$ and apply zero counting arguments from entire function theory to prove Theorem~\ref{thm:discrete-beurling}.

We will also need another standard estimate for canonical products for certain well-behaved uniformly discrete sequences. A sequence $\Lambda=(\lambda_{j})_{j\in\mathbb{Z}}\subset\mathbb{R}$ is called uniformly discrete if $\displaystyle{\inf_{i\neq j}|\lambda_i-\lambda_j|>0}$. This estimate was originally used by Beurling to construct interpolating functions in the Paley--Wiener space.
\begin{lemmax}[Lemma 7, p.~357 in~\cite{MR1057614}]\label{lem:beurling-product}
Let $\Lambda=(\lambda_j)_{j\in\mathbb{Z} }\subset \mathbb{R}$ be a uniformly discrete sequence such that every interval of length $L$ in a subdivision of $\mathbb{R}$ contains exactly $m$ points of $\Lambda$. Assume $0 \in \Lambda$, and let $\displaystyle{\delta:=\inf_{i\neq j}|\lambda_i-\lambda_j|>0}$. The limit
\[
f(z) = \lim_{R \to \infty} \left\{ \prod_{\substack{|\lambda| < R \\ \lambda \neq 0}} \left(1 - \frac{z}{\lambda}\right) \right\}
\]
exists for all $z \in \mathbb{C}$. Moreover, $f$ is an entire function that vanishes on $\Lambda \setminus \{0\}$, satisfies $f(0) = 1$, and obeys
\begin{equation*}
|f(x + iy)| \leq C(|z| + 1)^{5m} e^{\pi \nu |y|}
\end{equation*}
for every $\nu>\frac{m}{L}$, where $C$ depends on $L,m,\nu$ and $\delta$.
\end{lemmax}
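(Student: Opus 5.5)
The plan is to compare the product over $\Lambda$ with the product over a reference lattice of the same density $m/L$. That lattice product is essentially an explicit sine/cosine with exponential type $\pi m/L$. The discrepancy between the two products should then cost only a polynomial factor in $|z|$.

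Let the subdivision consist of the intervals $I_k=[t+kL,t+(k+1)L)$, $k\in\mathbb{Z}$, each containing exactly $m$ points of $\Lambda$. In each $I_k$ I place $m$ reference points $\lambda^0_{k,i}:=t+kL+(i+\tfrac12)\tfrac{L}{m}$, $i=0,\dots,m-1$, and pair them with the $m$ points $\lambda_{k,i}$ of $\Lambda\cap I_k$ listed in increasing order. Thus $|\lambda_{k,i}-\lambda^0_{k,i}|\le L$.

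The first step is existence of the limit. For $|\lambda|$ large we have $\log(1-z/\lambda)=-z/\lambda+O(|z|^2/\lambda^2)$, so convergence of the symmetric partial products reduces to convergence of $\sum_{0<|\lambda|<R}1/\lambda$ as $R\to\infty$. Grouping by intervals gives
\[
\sum_{\lambda\in I_k}\frac1\lambda=\frac{m}{kL}+O(k^{-2}),
\]
and the contributions of $I_k$ and $I_{-k}$ cancel up to $O(k^{-2})$. The truncation at $\pm R$ cuts at most one interval on each side, which contributes $O(1/R)$. Hence the partial sums are Cauchy, the limit exists locally uniformly in $z$, and $f$ is entire. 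It vanishes exactly on $\Lambda\setminus\{0\}$ and has $f(0)=1$, since every factor equals $1$ at $z=0$.

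For the growth bound, I write $\log|f(z)|=\log|f_0(z)|+\sum\log|D_\lambda(z)|$. Here $f_0$ is the analogous symmetric product over the reference points, and
\[
D_\lambda(z):=\frac{\lambda^0}{\lambda}\cdot\frac{z-\lambda}{z-\lambda^0}.
\]
If the zero at $0$ breaks the pairing in its own interval, I first remove finitely many factors; this changes the bound only by a polynomial factor. Since the reference points form a shifted arithmetic progression of step $L/m$, $f_0$ is, up to an affine change of variable and a normalization, a cosine-type function. Hence
\[
|f_0(x+iy)|\le C e^{\pi\frac{m}{L}|y|}.
\]

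Now fix $z=x+iy$ and split the intervals into near ones, $\operatorname{dist}(x,I_k)\le 2L$ (boundedly many), and far ones. For a far interval,
\[
\log\left|\frac{z-\lambda}{z-\lambda^0}\right|=O\!\left(\frac{L}{|x-kL|}\right),
\]
and summing over $k$ with $|kL|\lesssim |z|$ gives $O(m\log(|z|+2))$. For $|kL|\gg|z|$ I group the terms $\log|\lambda^0/\lambda|+\log|(z-\lambda)/(z-\lambda^0)|$ inside each interval. Together these are $O(L|z|/k^2L^2)$, which is summable and contributes $O(1)$. The leftover factors $\prod\lambda^0/\lambda$ over the intermediate range give another $O(m\log(|z|+2))$ by the same $1/k$ estimate. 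Collecting the constants should account for the $(|z|+1)^{5m}$.

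I expect the near intervals to be the main obstacle, because $z$ can be close to some $\lambda^0$, which makes $D_\lambda$ large. To handle it, I combine the near factors with the corresponding factors of $f_0$: $\prod_{\text{near}}|z-\lambda|$ is at most $(3L)^{m\cdot O(1)}$. The quotient of $f_0$ by its near factors is estimated away from those zeros and then inside small disks via the maximum principle. This works on $|y|\le 1$, where the bound only needs to hold up to constants. The slack $\nu>m/L$ absorbs the bounded number of constant and $e^{O(|y|)}$ losses incurred there and in the shift of the lattice. The constant $C$ then depends only on $L$, $m$, $\nu$, and on $\delta$, which enters only through the normalization near $0$.
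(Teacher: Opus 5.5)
The paper never proves this lemma. It quotes it from Beurling's collected works and uses it as a black box in Lemma~\ref{lem:interpolation}, so the only comparison is with citing. Your route is the standard one and it is sound: you treat $\Lambda$ as a bounded perturbation ($|\lambda-\lambda^0|<L$) of a progression of step $h=L/m$ and estimate $f/f_0=\prod D_\lambda$.

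Carried out, it gives more than is quoted. For $|\lambda|\ge 2|z|+L$ you have $|\log D_\lambda(z)|\le 2|z|\,|\lambda-\lambda^0|/|\lambda\lambda^0|$, so the tail is $O(1)$. In the remaining range, the two harmonic sums $\sum L/|x-\lambda^0|$ (over far reference points) and $\sum \log(1+L/|\lambda|)$ are each at most $2m\log(2+|z|/L)+O(1)$. The near factors are bounded. This yields $|f(z)|\le C(1+|z|)^{4m}e^{\pi\frac{m}{L}|y|}$, which is the endpoint $\nu=m/L$ with an exponent below $5m$.

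What a self-contained proof buys is explicit control of how $C$ depends on the data. This matters because Lemma~\ref{lem:interpolation} applies the lemma to every shift $\widetilde\Gamma-t'$ and needs constants independent of $t'$.

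That uniformity is exactly where your write-up slips. With reference points $t+kL+(i+\frac12)\frac{L}{m}$, the symmetric product is
\[
f_0(z)=\frac{\sin(\pi(s-z)/h)}{\sin(\pi s/h)},\qquad s\equiv t+\tfrac{h}{2}\pmod h .
\]
So the constant in $|f_0(x+iy)|\le Ce^{\pi\frac{m}{L}|y|}$ is $1/|\sin(\pi s/h)|$. It blows up as a reference point approaches $0$, and $f_0$ is not even defined when $t\equiv \frac h2\pmod h$. As written, $C$ therefore depends on the offset $t$, not only on $L,m,\nu,\delta$.

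Fix it as follows:
\begin{itemize}
\item Use the $t$-independent reference set $\{(j+\frac12)\frac{L}{m}: j\in\mathbb{Z}\}$. It still has exactly $m$ points in each half-open $I_k$, and it gives $f_0(z)=\cos(\pi m z/L)$.
\item Pair the point $0\in\Lambda$ with a reference point $\lambda^0_*$ in its own interval. Then $\frac{L}{2m}\le|\lambda^0_*|<L$, and the entire function $f_0(z)/(1-z/\lambda^0_*)$ is bounded by $C(m)e^{\pi\frac{m}{L}|y|}$.
\item With this choice, $\delta$ enters only through $|\lambda^0/\lambda|\le 1+L/\delta$.
\end{itemize}

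Also drop the claim that the slack $\nu>m/L$ ``absorbs'' $e^{O(|y|)}$ losses. A loss $e^{c|y|}$ with $c$ fixed cannot be absorbed by an arbitrarily small slack. In fact there is no such loss, but you should say why. Your bound $\prod_{\mathrm{near}}|z-\lambda|\le (3L)^{O(m)}$ holds only for bounded $|y|$. For $|y|\ge 1$ the near factors satisfy $|z-\lambda|/|z-\lambda^0|\le 1+L/|y|\le 1+L$ directly. So your maximum-principle argument is needed only in the strip $|y|\le 1$.

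Finally, replace ``collecting the constants should account for $(|z|+1)^{5m}$'' by the explicit count above, which gives the exponent $4m$.
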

We can now use Lemma~\ref{lem:beurling-product} to construct the following interpolation result.
\begin{lemma}\label{lem:interpolation}
 Let $T=(t_k)_{k=1}^\infty\subset(0,\infty)$ be uniformly discrete. Assume that there are $L>0$ and $M\in\mathbb{N}$ such that every interval $I_n=[nL,(n+1)L)$ contains exactly $M$ points of $T$ for suffiently large $n$. Then for every $(\eta_k)_{k\in\mathbb{N}}\in\ell^\infty$, there exist $K,C>0$ and an entire function $g$ such that $g(t_k)=\eta_k$ for all $k\in\mathbb{N}$ and
 \[
 |g(re^{i\theta})|\leq C\delta^{-1}(1+r)^K|\Pi(re^{i\theta})|\quad\text{for all $\theta\in\left(-\frac{\pi}{2},\frac{\pi}{2}\right)$ and $r>0$}
 \]
 where $\Pi$ is the canonical product with zero set $\widetilde{\Gamma}:=\{\pm t_k:k\in\mathbb{N}\}$ and $\delta:=\inf_{\gamma\in\widetilde{\Gamma}}|re^{i\theta}-\gamma|$. Additionally, for some $n\geq 0$,
 \[
 |\Pi(x)|\lesssim (1+|x|)^n\quad \text{for all $x\in\mathbb{R}$}\quad\text{and}\quad 
 |g(x)|\lesssim(1+|x|)^n\quad\text{for all $x\in\mathbb{R}$.}
 \]
\end{lemma}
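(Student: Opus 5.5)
We will build $g$ as a Lagrange-type interpolation series against the symmetric product $\Pi(z)=\prod_k(1-z^2/t_k^2)$. Its zero set $\widetilde\Gamma=\{\pm t_k\}$ is uniformly discrete. After adding finitely many points if needed (which only changes $\Pi$ by a polynomial factor), every interval $[nL,(n+1)L)$ and its reflection contains exactly $M$ points of $\widetilde\Gamma$. We then add $0$ as an auxiliary point. This places $\widetilde\Gamma\cup\{0\}$ exactly in the setting of Lemma~\ref{lem:beurling-product} with $m=M$.

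The Beurling product from Lemma~\ref{lem:beurling-product} coincides with $\Pi$: the symmetric partial products over $|\lambda|<R$ pair $\pm t_k$ into $(1-z^2/t_k^2)$. Hence
\[
|\Pi(x+iy)|\le C(1+|z|)^{5M}e^{\pi\nu|y|},
\]
and on the real line $|\Pi(x)|\lesssim(1+|x|)^{5M}$, which gives the first real-line bound with $n\ge 5M$. We will also need a lower bound on $|\Pi'(t_k)|$. Writing $\Pi'(t_k)=-\frac{2}{t_k}\prod_{j\ne k}(1-t_k^2/t_j^2)$, we factor it as the product of the Beurling product for $\widetilde\Gamma\setminus\{t_k\}$ (recentered at $t_k$) and a factor of size $\asymp 1/t_k$. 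The standard argument is this: in each block of length $L$ there are exactly $M$ points and separation is at least $\delta_0$, so local products are bounded below, while the far blocks contribute a convergent symmetric product. This yields
\[
|\Pi'(t_k)|\gtrsim(1+t_k)^{-N_0}
\]
for some $N_0$. We expect this to be the main technical step. We would do it by comparing with the lattice product $\sin(\pi M z/L)$, whose quotient by $\Pi$ has polynomially controlled size on and near the real axis, as in Beurling's proof.

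Define
\[
g(z)=\sum_{k}\eta_k\,\frac{\Pi(z)}{\Pi'(t_k)(z-t_k)}\Big(\frac{z}{t_k}\Big)^{N},
\]
with $N$ large enough for convergence. Then $g(t_k)=\eta_k$, since all other terms vanish at $t_k$ and the $k$-th term equals $\eta_k$. The factor $(z/t_k)^N$ is harmless at $z=t_k$ and gives decay $t_k^{-N}$. Since $\eta\in\ell^\infty$, the counting function of $T$ is linear, and $|\Pi'(t_k)|^{-1}\lesssim(1+t_k)^{N_0}$, we get for $z=re^{i\theta}$
\[
|g(z)|\le |\Pi(z)|(1+r)^N\sum_k \frac{\|\eta\|_\infty(1+t_k)^{N_0}}{t_k^{N}|z-t_k|}\le \|\eta\|_\infty\,\delta^{-1}|\Pi(z)|(1+r)^{N}\sum_k (1+t_k)^{N_0-N}.
\]
Here we used $|z-t_k|\ge\delta$. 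The sum converges once $N>N_0+1$. This gives the stated bound with $K=N$, valid for all $\theta$ and in particular on $(-\pi/2,\pi/2)$; the case $\delta=0$ is trivial. Locally uniform convergence of the series then shows that $g$ is entire.

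For the real-line bound on $g$, the estimate above is insufficient near the zeros because of $\delta^{-1}$. For $x$ within distance $\delta_0/2$ of some $t_{k_0}$, we treat the $k_0$ term separately: $\Pi(x)/(x-t_{k_0})$ is bounded by the maximum modulus principle on the disk of radius $\delta_0/2$, using the polynomial bound on $\Pi$ along the nearby circle. The remaining terms keep $|x-t_k|\ge\delta_0/2$ by uniform discreteness. Combined with $|\Pi(x)|\lesssim(1+|x|)^{5M}$, this gives $|g(x)|\lesssim(1+|x|)^{n}$ after enlarging $n$.
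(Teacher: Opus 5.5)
Your construction is essentially the paper's: a Lagrange series against the symmetric product $\Pi$ with a polynomial convergence factor, the upper bound for $\Pi$ from Lemma~\ref{lem:beurling-product}, and $|z-t_k|\ge\delta$ for the $\delta^{-1}$ estimate.

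\textbf{The gap.} It is the step you flag yourself. The bound $|\Pi'(t_k)|\gtrsim(1+t_k)^{-N_0}$, uniform in $k$, is asserted rather than proved. Convergence of the series, the sector estimate and the real-line bound for $g$ all rest on it. The justification you sketch does not work as stated:
\begin{itemize}
\item Lemma~\ref{lem:beurling-product} only gives upper bounds.
\item Saying that the far blocks contribute a convergent product yields no lower bound uniform in $k$. In $\prod_{j\ne k}|1-t_k^2/t_j^2|$ the factors with $t_j<t_k$ are large and those with $t_j$ just beyond the local block are small. The polynomial losses come exactly from how these balance.
\item Comparing with $\sin(\pi Mz/L)$ would require a two-sided estimate for bounded perturbations of a lattice. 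That is a separate theorem you neither prove nor cite, and by itself it controls $\Pi$ away from its zeros, not $\Pi'(t_k)$.
\end{itemize}

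\textbf{The missing idea.} The lower bound follows from the \emph{upper} bound in Lemma~\ref{lem:beurling-product}, because the constant there depends only on $L,m,\nu,\delta$, which are translation invariant.

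First replace $\widetilde\Gamma$ by a symmetric set $\Gamma^*$ with $0\notin\Gamma^*$ that differs from $\widetilde\Gamma$ in finitely many points and has exactly $M$ points in every block. Then $\Pi=R\,\Pi^*$ with $R$ rational, so $|\Pi'(t_k)|\asymp t_k^{c}|\Pi^{*\prime}(t_k)|$ for large $k$.

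The set $\Gamma^*-t_k$ contains $0$, has the same separation, and has exactly $M$ points in each block of the translated subdivision. So its Beurling product
\[
B_{t_k}(w)=\prod_{\gamma\in\Gamma^*\setminus\{t_k\}}\Bigl(1-\frac{w}{\gamma-t_k}\Bigr)
\]
satisfies $|B_{t_k}(w)|\le C(1+|w|)^{5M}e^{\pi\nu|\Im w|}$ with $C$ independent of $k$. Comparing partial products gives $\Pi^*(z)=\Pi^{*\prime}(t_k)(z-t_k)B_{t_k}(z-t_k)$. Evaluating at $z=0$, where $\Pi^*(0)=1$, gives
\[
|\Pi^{*\prime}(t_k)|=\frac{1}{t_k\,|B_{t_k}(-t_k)|}\ge\frac{1}{C\,t_k(1+t_k)^{5M}}.
\]
So the recentred product enters $\Pi'(t_k)$ as a reciprocal, not as a multiplicative factor; this is why an upper bound suffices. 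The paper runs the same argument evaluating at $z=i$.

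The same identity also gives, on $\mathbb{R}$,
\[
\Bigl|\frac{\Pi^*(x)}{\Pi^{*\prime}(t_k)(x-t_k)}\Bigr|=|B_{t_k}(x-t_k)|\le C(1+|x-t_k|)^{5M},
\]
which yields the real-line bound for $g$ at once. Your maximum-modulus treatment of the nearest term also works, but only after the lower bound is in place.
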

 \begin{proof}
    Let $\widetilde{\Gamma}:=\{\pm t_k:k\in\mathbb{N}\}$. Reflecting the subdivision to the negative axis, $\widetilde{\Gamma}$ has exactly $M$ points in each interval of a length $L$ subdivision of $\mathbb{R}$, up to finitely many intervals near the origin. These finite exceptions only change the products below by polynomial factors. Since their number is fixed and $\widetilde{\Gamma}$ is uniformly discrete, the constants below can be chosen independently of the shift.

     Fix $\nu>\frac{M}{L}$.
     Fix $t'\in T$ and define the shifted product
     \[
     B_{t'}(z):=\prod_{\gamma\in\widetilde{\Gamma}\setminus\{t'\}}\left(1-\frac{z}{\gamma-t'
     }\right).
     \]
     Applying Lemma~\ref{lem:beurling-product} to $\widetilde{\Gamma}-t'$ away from the finitely many exceptional intervals, and absorbing the corresponding finite factors into the polynomial term, we know that there are constants $C,N_0>0$, independent of $t'$, such that
     \[
     |B_{t'}(z)|\leq C(1+|z|)^{N_0}e^{\pi \nu |\Im z|}.
     \]
     If
     \[
     \Pi(z):=\prod_{k=1}^{\infty}\left(1-\frac{z^2}{t_k^2}\right)
     \]
     is the canonical product of $\widetilde{\Gamma}$, then the corresponding interpolation function satisfies
     \[
     \frac{\Pi(z)}{\Pi'(t')(z-t')}=B_{t'}(z-t').
     \]
     In particular, fixing one point $t_0\in T$ gives
     \[
     |\Pi(z)|\leq C|\Pi'(t_0)|\,|z-t_0|(1+|z-t_0|)^{N_0}e^{\pi \nu|\Im z|}.
     \]
     Since $t_0$ is fixed, this implies
     \[
     |\Pi(z)|\lesssim (1+|z|)^{N_0+1}e^{\pi \nu|\Im z|}.
     \]
     In particular,
     \[
     |\Pi(x)|\lesssim (1+|x|)^{N_0+1}\quad\text{for all $x\in\mathbb R$}.
     \]

     This means that we have the estimate
     \[
     \frac{|\Pi(z)|}{|\Pi'(t')||z-t'|}\leq C(1+|z-t'|)^{N_0}e^{\pi \nu|\Im z|}.
     \]
     Evaluating at $z=i$, we see that
     \[
     \frac{|\Pi(i)|}{|\Pi'(t')||i-t'|}\leq C(2+t')^{N_0}e^{\pi \nu}.
     \]
     Rearranging, we see that there exist $\tilde{C},K_0>0$ independent of $t'$ such that
     \[
     |\Pi'(t')|\geq \tilde{C}(1+t')^{-K_0}.
     \]
     Choose an integer $K>\max\{K_0,N_0\}+2$ and define
     \[
     g(z):=\sum_{k\in\mathbb{N}}\frac{(1+z)^K}{(1+t_k)^K}\frac{\Pi(z)}{\Pi'(t_k)(z-t_k)}\eta_k.
     \]
     The interpolation property gives $g(t_k)=\eta_k$. For $z=re^{i\theta}$, we have
     \begin{align*}
         |g(z)|&\leq|\Pi(re^{i\theta})|\|\eta\|_{\ell^\infty}\sum_{k\in\mathbb{N}}
        \frac{(1+r)^K}{(1+t_k)^K}\frac{1}{|\Pi'(t_k)||re^{i\theta}-t_k|}\\
         &\leq (1+r)^K|\Pi(re^{i\theta})|\|\eta\|_{\ell^\infty}\sum_{k\in\mathbb{N}}
        \frac{(1+t_k)^{K_0}}{\tilde{C}(1+t_k)^K|re^{i\theta}-t_k|}.
     \end{align*}
     Since $|re^{i\theta}-t_k|\geq \delta$, we know that
     \[
     |g(re^{i\theta})|\leq \delta^{-1} (1+r)^K|\Pi(re^{i\theta})|\left(\frac{\|\eta\|_{\ell^\infty}}{\tilde C}\sum_{k\in\mathbb{N}}\frac{(1+t_k)^{K_0}}{(1+t_k)^K}\right).
     \]
     Since $T$ has at most linear density and $K>K_0+2$, the sum converges:
     \[
     \sum_{k\in\mathbb{N}}\frac{(1+t_k)^{K_0}}{(1+t_k)^K}<\infty.
     \]

     For real $x$, the identity
     \[
     \frac{\Pi(x)}{\Pi'(t_k)(x-t_k)}=B_{t_k}(x-t_k)
     \]
     holds with the removable interpretation at $x=t_k$. Hence
     \begin{align*}
     |g(x)|&\leq C\|\eta\|_{\ell^\infty}(1+|x|)^K
     \sum_{k\in\mathbb{N}}\frac{(1+|x-t_k|)^{N_0}}{(1+t_k)^K}\\
     &\leq C\|\eta\|_{\ell^\infty}(1+|x|)^{K+N_0}
     \sum_{k\in\mathbb{N}}\frac{1}{(1+t_k)^{K-N_0}}.
     \end{align*}
     Since $K>N_0+2$ and $T$ has at most linear density, the last sum converges. Combining this with the preceding bound for $\Pi$, the two real-line estimates in the statement hold with
     $n=\max\{N_0+1,K+N_0\}$.
 \end{proof}
\section{An inequality between time and frequency real-line decay}
In this section we construct an inequality between asymptotic decay of $f$ and $\hat{f}$ given only the decay of $f$ on a discrete set. This will be crucial in our proof of Theorem~\ref{thm:discrete-beurling}.

Using Lemma~\ref{lem:interpolation}, we can treat the points in $\Lambda, M$ as zeros of $f$ and $\hat{f}$ respectively as long as they do not decay too fast.
We can then use a similar strategy to the one used in~\cite{kulikov2025}.

\begin{lemma}\label{lem:jensen-counting}
    Suppose that $0<\varphi<\frac{\pi}{2}$ and that $F$ is analytic in $S(\varphi):=\{z\in\mathbb{C}:|\arg(z)|<\varphi\}$, continuous up to the boundary, and satisfies $|F(z)|\leq C_F e^{a\pi|\Im(z)|}$ with some $a>0$. Let $\Gamma=(\gamma_j)_{j\in\mathbb{Z}}\subset\mathbb{R}_+$ be a discrete set such that, for some $\delta>0$ and $C_\Gamma>0$,
    \[
    |\Gamma\cap[u,v]|\geq a(1+\delta)(v-u)-C_\Gamma,\quad 0\leq u<v.
    \]
    If $F|_\Gamma=0$, then 
    \[
    \limsup_{x\to\infty}\frac{\log|F(x)|}{x}\leq -2a\delta\sin\varphi<0.
    \]
\end{lemma}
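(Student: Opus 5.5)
We use harmonic analysis in the sector: a Jensen/Carleman-type zero count, applied to discs sitting on the real axis inside $S(\varphi)$, compared against the exponential-type bound. Fix a large real point $x_0$. To bound $\log|F(x_0)|$ we use the Poisson--Jensen formula on a disc $D(x_0,R)$ with $R=x_0\sin\varphi\,(1-\varepsilon)$, so that $D(x_0,R)\subset S(\varphi)$. Equivalently, after the scaling $z=x_0+Rw$ we work on the unit disc with an analytic function $G(w)=F(x_0+Rw)$.

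\textbf{Step 1 (the disc formula).} Jensen's formula gives
\[
\log|F(x_0)|=\frac{1}{2\pi}\int_0^{2\pi}\log|F(x_0+Re^{it})|\,dt-\sum_{\gamma\in\Gamma\cap(x_0-R,x_0+R)}\log\frac{R}{|\gamma-x_0|}.
\]
On the boundary of the disc we use $\log|F|\le \log C_F+a\pi R|\sin t|$. Averaging, $\frac{1}{2\pi}\int_0^{2\pi}|\sin t|\,dt=\frac{2}{\pi}$, so the boundary term is at most $2aR+O(1)$.

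\textbf{Step 2 (lower bound on the zero sum).} Write $n(s)=|\Gamma\cap[x_0-s,x_0+s]|$. The zero sum equals $\int_0^R \frac{n(s)}{s}\,ds$. The density hypothesis gives
\[
n(s)\ge 2a(1+\delta)s-C_\Gamma,
\]
so the sum is at least $2a(1+\delta)R-C_\Gamma\log R-O(1)$. There is a small issue when $s$ is small: the bound is negative there, so we use $n(s)\ge0$ for $s\le1$.

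\textbf{Step 3 (conclusion).} Combining the two steps,
\[
\log|F(x_0)|\le 2aR-2a(1+\delta)R+O(\log R)=-2a\delta R+O(\log R).
\]
With $R=(1-\varepsilon)x_0\sin\varphi$, dividing by $x_0$ and letting $x_0\to\infty$ and then $\varepsilon\to0$ yields
\[
\limsup_{x\to\infty}\frac{\log|F(x)|}{x}\le-2a\delta\sin\varphi.
\]
This is exactly the claimed bound.

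\textbf{Technicalities.} If $x_0$ happens to be a zero, the inequality is trivial at that point. If $F\equiv0$, the claim is trivial. If $F$ vanishes at $x_0$ with zeros near it, Jensen's formula applied at $x_0$ is still valid as an inequality, since zeros of $F$ not in $\Gamma$ only improve the bound. A cleaner route is to apply the formula at a nearby non-zero point $x_0'$ and use the fact that $\limsup$ is insensitive to this shift. Continuity up to the boundary is not really needed, since the disc lies strictly inside the sector.

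The main obstacle is matching the constants. The boundary average of $a\pi|\Im z|$ must produce exactly $2aR$, to be set against the zero count $2a(1+\delta)R$. This works precisely because the disc is centered on the real axis and the averages $\frac{1}{2\pi}\int_0^{2\pi}|\sin t|\,dt=2/\pi$ and $\int_0^R\frac{2as}{s}\,ds=2aR$ agree. Optimizing the radius to the maximal disc in the sector gives the $\sin\varphi$ factor.
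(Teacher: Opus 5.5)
Your proposal is correct and is essentially the paper's proof: Jensen's formula on a disc centred at $x$ on the real axis with radius about $x\sin\varphi$, with the boundary average $\frac{1}{2\pi}\int_0^{2\pi} a\pi R|\sin t|\,dt = 2aR$ set against the zero count $\int_0^R \frac{n(s)}{s}\,ds \geq 2a(1+\delta)R - O(\log R)$. The only difference is cosmetic: you shrink the radius to $(1-\varepsilon)x\sin\varphi$ so the closed disc lies strictly inside the sector, whereas the paper takes $r = x\sin\varphi$ directly and uses continuity of $F$ up to the boundary.
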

\begin{proof}
The argument is similar to the proof of Claim 5.3 in~\cite{kulikov2025}. Fix $x>0$ and put $r=x\sin\varphi$. The disk $x+r\mathbb{D}$ lies in $S(\varphi)$. If $F(x)=0$, the desired estimate is trivial, so assume $F(x)\neq0$. Let $n_x(t)$ be the number of zeros of $F$ in $x+t\mathbb{D}$, counted with multiplicity. Jensen's formula gives
\[
\log|F(x)|+\int_0^r\frac{n_x(t)}{t}\,dt
=\frac{1}{2\pi}\int_{-\pi}^\pi\log|F(x+re^{i\tau})|\,d\tau .
\]
The right-hand side satisfies
\[
\frac{1}{2\pi}\int_{-\pi}^\pi\log|F(x+re^{i\tau})|\,d\tau\leq2ar+O(1).
\]
Let $\rho=a(1+\delta)$. Since $F$ vanishes on $\Gamma$ and $x-r>0$, for $0<t<r$ we have
\[
n_x(t)\geq |\Gamma\cap[x-t,x+t]|\geq 2\rho t-C_\Gamma.
\]
With $A_0=\max\{1,\frac{C_\Gamma}{2\rho}\}$, it follows that
\[
\int_0^r\frac{n_x(t)}{t}\,dt
\geq \int_{A_0}^r\frac{2\rho t-C_\Gamma}{t}\,dt
=2\rho r+O(\log r).
\]
Combining these two estimates, we get
\[
\log|F(x)|\leq 2ar-2a(1+\delta)r+O(\log r)=-2a\delta x\sin\varphi+O(\log x).
\]
This means that 
\[
\limsup_{x\to\infty}\frac{\log|F(x)|}{x}\leq -2a\delta\sin\varphi<0.\qedhere
\] 
\end{proof}
In what follows, we will use the following notation:
\[
\kappa_f:=(2\pi)^{-1}\min\{-h_f(0),-h_f(\pi)\}\quad\text{and}\quad\kappa_{\hat{f}}:=(2\pi)^{-1}\min\{-h_{\hat{f}}(0),-h_{\hat{f}}(\pi)\}.
\]
\begin{lemmax}[Claim 5.1 in~\cite{kulikov2025}]\label{lem:kappa-estimate}
    Let $1<p,q<\infty$ with $\frac{1}{p}+\frac{1}{q}=1$. Assume that $\hat{f}$ is an entire function of order $q$ and that $\kappa_{\hat{f}}>0$. Then 
    $f$ is an entire function of order $p$ with Phragm\'{e}n--Lindel\"{o}f indicator $h_f(\theta)\leq2\pi\frac{|\sin \theta|^p}{p( q \kappa_{\hat{f}})^\frac{p}{q}}$.
\end{lemmax}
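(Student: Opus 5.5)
The plan is to bound $f$ on rays through the inverse Fourier integral $f(z)=\int_{\mathbb{R}}\hat f(\xi)e^{2\pi i z\xi}\,d\xi$ and a Young-type optimization. First fix any $0<\kappa<\kappa_{\hat f}$. By the definition of the indicator at $\theta=0$ and $\theta=\pi$, we have $\log|\hat f(\pm t)|\le -2\pi\kappa t^q$ for all large $t$. Hence $|\hat f(\xi)|\le C_\kappa e^{-2\pi\kappa|\xi|^q}$ for all real $\xi$, since $\hat f$ is continuous and therefore bounded on compacts. In particular $\hat f\in L^1$. The integral then converges absolutely for every $z\in\mathbb{C}$, because $q>1$ beats the linear exponent. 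This shows $f$ extends to an entire function, and differentiation under the integral sign is justified.

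Next, for $z=re^{i\theta}$ we have $|e^{2\pi i z\xi}|=e^{-2\pi r\sin\theta\,\xi}\le e^{2\pi r|\sin\theta||\xi|}$. Therefore
\[
|f(re^{i\theta})|\le C_\kappa\int_{\mathbb{R}}\exp\bigl(2\pi\bigl(r|\sin\theta||\xi|-\kappa|\xi|^q\bigr)\bigr)\,d\xi .
\]
The exponent $\phi(s)=sy-\kappa s^q$ with $y=r|\sin\theta|$ is maximized at $s_*=(y/(q\kappa))^{1/(q-1)}$. Using $\frac{1}{q-1}=p-1$ and $\frac{q}{q-1}=p$, the Legendre transform gives
\[
\max_{s\ge0}\phi(s)=\frac{y^p}{p\,(q\kappa)^{p/q}} .
\]
This is the standard identity $\sup_s(sy-\kappa s^q)=\frac{1}{p}(q\kappa)^{-p/q}y^p$, and I will check it by direct substitution. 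To turn the pointwise maximum into a bound on the integral, I split the integral at $|\xi|=2s_*+R_0$. On the inner region the integrand is at most $e^{2\pi\max\phi}$ and the length is $O(r^{p-1}+1)$. On the outer region $\phi(s)\le -c s^q$ for $s$ large relative to $s_*$, so that piece contributes $O(1)$. This yields
\[
\log|f(re^{i\theta})|\le 2\pi\frac{r^p|\sin\theta|^p}{p(q\kappa)^{p/q}}+O(\log r)
\]
uniformly in $\theta$.

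Consequently $\log M_f(r)\lesssim r^p$, so $f$ has order at most $p$. Dividing by $r^p$ and taking $\limsup$ gives $h_f(\theta)\le 2\pi|\sin\theta|^p/(p(q\kappa)^{p/q})$ for each $\kappa<\kappa_{\hat f}$. Letting $\kappa\uparrow\kappa_{\hat f}$ gives the claimed indicator bound.

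The main obstacle is the claim that the order is exactly $p$ rather than at most $p$. If $f$ had order strictly less than $p$, then $h_f$, computed with respect to $r^p$, would vanish identically, and the inequality would hold trivially. So for the purposes of the indicator bound, order at most $p$ suffices. For order exactly $p$ in the nontrivial case $f\not\equiv0$, I would argue as follows. If $f$ had order $<p$, then $f$ would have at most order-$<p$ growth on the real axis. Combined with the Gaussian-type decay of $\hat f$, a Hardy/Morgan-type argument would force $f\equiv0$; this is the symmetric counterpart of the bound just proved, applied to $\hat f$ from the decay of $f$ on $\mathbb{R}$. Since only the inequality is used later, I would present the order statement as "order at most $p$, with equality unless $f\equiv0$", and place the weight of the proof on the Legendre computation.
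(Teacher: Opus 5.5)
The paper gives no proof of this lemma; it imports it as Claim~5.1 of Kulikov--Nazarov--Sodin. Your proof of the indicator bound and of order at most $p$ is the standard argument and is correct: inverse Fourier integral, $|e^{2\pi i z\xi}|\le e^{2\pi r|\sin\theta||\xi|}$, and the Legendre transform of $\kappa|\xi|^q$. There is one slip. Splitting at $|\xi|=2s_*+R_0$ does not make the outer piece $O(1)$ when $1<q<2$, because $\phi(s)=s(y-\kappa s^{q-1})$ stays positive up to $s=q^{1/(q-1)}s_*>2s_*$ in that range. You can split at $Cs_*$ with $qC^{1-q}<1$ ($C=3$ works for every $q>1$). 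Alternatively, avoid splitting by using Young's inequality with $\kappa'<\kappa$: $\int|\hat f(\xi)|e^{2\pi|y\xi|}\,d\xi\le e^{2\pi|y|^p/(p(q\kappa')^{p/q})}\int|\hat f(\xi)|e^{2\pi\kappa'|\xi|^q}\,d\xi$.

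The genuine gap is your claim that the order is exactly $p$ unless $f\equiv0$. Your sketch for this does not work. The growth of $f$ on $\mathbb{R}$ is irrelevant, since $f$ is bounded there by $\|\hat f\|_{L^1}$. Both a Hardy/Morgan argument and your ``symmetric counterpart'' need decay of $f$ on $\mathbb{R}$, which is not a hypothesis of the lemma. What rules out small order is fast real-line decay of $f$, and you must extract it from the hypothesis that $\hat f$ is entire of order $q$ (of finite type, as is implicit in working with $h_{\hat f}$). Here is one way to do it.

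By continuity of $h_{\hat f}$ and the uniform upper bound $\log|\hat f(re^{i\theta})|\le(h_{\hat f}(\theta)+\varepsilon)r^q$ for large $r$, you get $|\hat f(\zeta)|\le e^{-c|\zeta|^q}$ in thin sectors around the two real half-axes. This gives $\|\hat f(\cdot+it)\|_{L^1}\le e^{C(1+|t|)^q}$. The vertical sides vanish because of the sector decay, so you can shift the contour in $f(x)=\int\hat f(\xi)e^{2\pi i x\xi}\,d\xi$ to $\operatorname{Im}\xi=t$. This gives $|f(x)|\le e^{-2\pi xt+C(1+|t|)^q}$, and optimizing in $t$ yields $|f(x)|\lesssim e^{-c|x|^p}$. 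Hence $h_f(0)<0$ and $h_f(\pi)<0$ with respect to $r^p$.

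Now suppose $f\not\equiv0$ had order $<p$. Its indicator with respect to $r^p$ would then be $\le0$ and $p$-trigonometrically convex. So $h_f(0)+h_f(\pi/p)\ge0$, which contradicts $h_f(0)<0$ and $h_f\le0$.

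Alternatively, state the conclusion as ``order at most $p$''. That is all the paper uses, and in its applications (Lemma~\ref{lem:kappa-bound} and the proof of Theorem~\ref{thm:discrete-beurling}) one has $f\in\mathcal{S}(p,q)$, which supplies the real-line decay of $f$ directly.
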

\begin{lemma}\label{lem:kappa-bound}
    Let $A>0$, let $f\in \mathcal{S}(p,q)$ where $\frac{1}{p}+\frac{1}{q}=1$, and assume that $0<\kappa_{f}<\frac{A}{2}$ and $\kappa_{\hat{f}}>0$. Assume that there exists a set $\Lambda=(\lambda_j)_{j\in\mathbb{Z}}\subset\mathbb{R}$ such that
    \[
    \limsup_{j\to\pm\infty}|\lambda_j|^{p-1}(\lambda_{j+1}-\lambda_j)<\alpha.
    \]
    If $\displaystyle{\sup_{\lambda\in\Lambda}|f(\lambda)|e^{A\pi|\lambda|^p}<\infty}$, then $\kappa_f>(2\alpha)^{-q}\kappa_{\hat{f}}$.
    
\end{lemma}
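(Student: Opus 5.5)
The plan is to argue by contradiction on the positive half-line, the negative half-line being symmetric. Suppose $\kappa_f\le(2\alpha)^{-q}\kappa_{\hat f}$. Pass to the variable $w=z^p$ on a thin sector $S(\varphi)$ around the positive real axis, and set $F_0(w):=f(w^{1/p})$ using the principal branch. This is analytic in $S(\varphi)$ for $\varphi<p\pi/2$.

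\textbf{Step 1: growth of $F_0$.} Apply Lemma~\ref{lem:kappa-estimate} with $\kappa_{\hat f}>0$. It gives $h_f(\theta)\le 2\pi|\sin\theta|^p/(p(q\kappa_{\hat f})^{p/q})$, so $F_0$ has at most exponential type in $S(\varphi)$. The key estimate is an upper bound of the form $|F_0(w)|\le C e^{\sigma\pi|\Im w|+o(|w|)}$ in the sector, where the type is $\sigma=\sigma(\kappa_{\hat f})$. To get it, I would compare the indicator $h_f(\theta)$ at $\theta=\phi/p$ with a multiple of $|\sin\phi|$, where $\phi=\arg w$. The real axis lies on the boundary of this comparison: there $F_0(t)$ decays like $e^{-2\pi\kappa_f t}$. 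I would therefore apply a Phragm\'en--Lindel\"of argument to $F_0(w)e^{2\pi\kappa' w}$ for $\kappa'<\kappa_f$ on the two half-sectors, obtaining a sharper bound of the form $e^{-2\pi\kappa' \Re w+\sigma\pi|\Im w|}$.

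\textbf{Step 2: the correction.} Pass to the points $t_j:=\lambda_j^p$ with $\lambda_j>0$. By the mean value theorem, $t_{j+1}-t_j\approx p\lambda_j^{p-1}(\lambda_{j+1}-\lambda_j)<p\alpha$, so these points have lower density at least $(p\alpha)^{-1}(1+\delta)$ for some $\delta>0$. Next, extract a uniformly discrete subsequence $T$ with exactly $m$ points in each interval of length $L$, with $m/L$ as close to $(p\alpha)^{-1}$ as we like. The hypothesis gives $|F_0(t)|\lesssim e^{-A\pi t}$ on $T$. Since $2\kappa_f<A$, the normalized values $\eta_k:=F_0(t_k)e^{2\pi\kappa'' t_k}$ lie in $\ell^\infty$ for some $\kappa''$ with $\kappa_f<\kappa''<A/2$. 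Lemma~\ref{lem:interpolation} then produces $g$ with $g(t_k)=\eta_k$ and polynomial bounds relative to $\Pi$. The function $F:=F_0(w)e^{2\pi\kappa'' w}-g(w)$ vanishes on $T$. On $S(\varphi)$, its type is controlled by Step 1 together with Lemma~\ref{lem:canonical-product-estimate}: $|\Pi|$ has type $\pi m/L$ in $|\Im w|$. Dividing off polynomial factors, which are harmless in the $\limsup$, we get $|F(w)|\le Ce^{a\pi|\Im w|}$ with an explicit $a$.

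\textbf{Step 3: counting zeros.} Apply Lemma~\ref{lem:jensen-counting} to $F$ and $T$. If the density of $T$ exceeds $a(1+\delta)$, then $F(t)$ decays exponentially on the real axis. Since $g$ is polynomially bounded there, it follows that $F_0(t)e^{2\pi\kappa'' t}$ is polynomially bounded. Hence $F_0(t)$ decays at rate $e^{-2\pi\kappa'' t}$, i.e.\ $-h_f(0)\ge 2\pi\kappa''>2\pi\kappa_f$. Doing the same at $\theta=\pi$ contradicts the definition of $\kappa_f$. The remaining task is the arithmetic: show that the condition ``density $(p\alpha)^{-1}$ exceeds the type $a$'' is exactly equivalent, after optimizing over the sector angle and over $\kappa',\kappa''$, to $\kappa_f\le(2\alpha)^{-q}\kappa_{\hat f}$. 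Here I expect the power $q$ to appear through $(q\kappa_{\hat f})^{p/q}$ raised to the power $1/(p-1)=q/p$.

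The main obstacle is Step 1 combined with this bookkeeping. The indicator bound $|\sin\theta|^p$ is not linear in $|\Im w|$ near the real axis, so the exponential type in $w$ has to come from the interplay between the real-axis decay $\kappa_f$ and the indicator bound. I would first try the approach of Claim 5.3 in~\cite{kulikov2025}: choose the sector angle so that the Jensen disk $x+x\sin\varphi\,\mathbb D$ sees the indicator at its worst point. I would then verify that the resulting inequality is precisely $\kappa_f>(2\alpha)^{-q}\kappa_{\hat f}$, with the strict inequality coming from $\delta>0$.
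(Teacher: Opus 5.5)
\textbf{There is a genuine gap in Step 2, which is where the argument has to do its work.}

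First, the bound $|F(w)|\le Ce^{a\pi|\Im w|}$ for $F=F_0(w)e^{2\pi\kappa''w}-g(w)$ is false. On the half-axis where the minimum defining $\kappa_f$ is attained, $\limsup_{t\to\infty}t^{-1}\log|F_0(t)|=-2\pi\kappa_f$. Since $g$ is polynomially bounded on $\mathbb{R}$ and $\kappa''>\kappa_f$, $F$ grows like $e^{2\pi(\kappa''-\kappa_f)t}$ along a sequence.

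More seriously, you charge $g$ its natural type $\pi m/L$ from $\Pi$, so $a\ge m/L$. But $m/L$ is exactly the density of the zero set $T$ that $g$ creates. So the hypothesis of Lemma~\ref{lem:jensen-counting} (zero density $>a(1+\delta)$) can never hold. A correction of type $\pi d$ that manufactures zeros of density $d$ gains nothing in a Jensen count. No optimisation over the sector angle or over $\kappa',\kappa''$ fixes this as long as $g$ is controlled only through $|\Im w|$.

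\textbf{The missing idea is to use two different exponential weights, so that the correction is invisible in a thin sector.}

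The paper multiplies $f$ by a factor \emph{below} $\kappa_f$. Then $G_\varepsilon(z)=e^{2\pi(\kappa_f-\varepsilon)z}f(z^{1/p})$ is bounded on the positive axis. By Lemma~\ref{lem:kappa-estimate} and Phragm\'en--Lindel\"of in $S(p\theta_0)$, it is $O(e^{a_0\pi|\Im z|})$ with $a_0<m/L<(\alpha p)^{-1}$.

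Interpolation then uses a steeper weight $\sigma\in(2\kappa_f,A)$: set $\tilde g(\tau)=f(\tau^{1/p})e^{\sigma\pi\tau}$ on $T'$ and $g=\tilde g\,e^{-\sigma\pi z}$. For $|\theta|\le\omega$ small, $-\sigma\pi\cos\theta+\pi(m/L)|\sin\theta|<-2\pi(\kappa_f+\delta)$. Hence $e^{2\pi(\kappa_f-\varepsilon)z}g(z)=o(1)$ there and contributes no type. Lemma~\ref{lem:jensen-counting} therefore applies to $F_\varepsilon=G_\varepsilon-e^{2\pi(\kappa_f-\varepsilon)z}g$ in $S(\min(p\theta_0,\omega))$.

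Your $F$ equals $e^{2\pi(\kappa''-\kappa_f+\varepsilon)w}F_\varepsilon$ with $\sigma=2\kappa''$. So your ingredients are right, but you must count zeros for this renormalised function. In effect, the type of $g$ is absorbed into the gap $2\pi(\kappa''-\kappa')\operatorname{Re}w$ inside a thin sector.

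Accordingly, the conclusion is not that $F_0(t)e^{2\pi\kappa''t}$ is polynomially bounded. Jensen gives $F_\varepsilon$ a negative decay rate independent of $\varepsilon$. This contradicts $\limsup_{x\to\infty}x^{-1}\log|G_\varepsilon(x)|=-2\pi\varepsilon$ once $\varepsilon\to0^+$.

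\textbf{The bookkeeping you defer is the substance of the lemma, and it is a one-sided estimate, not an equivalence.}

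The type read off on the ray $\arg w=p\theta$ is $2[\kappa_f\cos p\theta+|\sin\theta|^p/(p(q\kappa_{\hat f})^{p/q})]/\sin p\theta$. The argument above therefore forces $\kappa_f\ge(2\alpha)^{-1}\frac{\tan p\theta}{p}-\frac{|\sin\theta|^p}{p(q\kappa_{\hat f})^{p/q}\cos p\theta}$ for every $\theta\in(0,\frac{\pi}{2p})$. The strict inequality $\frac1p\tan p\theta>\sin\theta\,(\cos p\theta)^{-1/p}$, together with the choice $\eta=q\kappa_{\hat f}(2\alpha)^{1-q}$, then gives $\kappa_f>(2\alpha)^{-q}\kappa_{\hat f}$.
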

\begin{proof}
    We assume without loss of generality that the minimum in the definition of $\kappa_f$ is attained at the positive real axis, so that $h_f(0)=-2\pi\kappa_f$. If the minimum is attained at $\pi$, the same argument applies to $f(-x)$. We first prove that, for every $\theta\in(0,\frac{\pi}{2p})$,
    \[
    \kappa_f\geq (2\alpha)^{-1}\frac{\tan p\theta}{p}-\frac{1}{p(q\kappa_{\hat{f}})^\frac{p}{q}}\frac{|\sin\theta|^p}{\cos p\theta}.
    \]
    Suppose, to the contrary, that this estimate fails. Then there exist $\theta_0\in(0,\frac{\pi}{2p})$ and $0<\tilde{c}<1$ such that
    \[
    \kappa_f\cos p\theta_0+\frac{1}{p(q\kappa_{\hat{f}})^\frac{p}{q}}|\sin\theta_0|^p<(2\alpha)^{-1} \frac{\tilde{c}}{p}\sin p\theta_0.
    \]
    Consider the function
    \[
    G_\varepsilon(z)=e^{2\pi(\kappa_f-\varepsilon)z}f(z^\frac{1}{p}),
    \]
    where $z^\frac{1}{p}$ is taken with the principal branch in the right half-plane and $\varepsilon>0$ is small. Choosing $\varphi=p\theta_0$, Lemma~\ref{lem:kappa-estimate} and the Phragm\'{e}n--Lindel\"{o}f principle show that $|G_\varepsilon(z)|=O(e^{a_0\pi|\Im(z)|})$ throughout $S(\varphi)$, where $a_0=\tilde{c}(\alpha p)^{-1}<(\alpha p)^{-1}$.

    Since $\kappa_f<\frac{A}{2}$, we may choose $\sigma$ such that $2\kappa_f<\sigma<A$. Restricting to sufficiently large positive points of $\Lambda$ and using the mean value theorem, the strict limsup assumption gives
    \[
    \lambda_{j+1}^p-\lambda_j^p<\alpha p \quad\text{and}\quad \lambda_j>0
    \]
    for all $j\geq J$ for some $J\in\mathbb{Z}$. Now let $T=(\lambda_j^p)_{j=J}^\infty$. Choose $L>0$, $M\in\mathbb{N}$, and $h>0$ such that
    \[
        a_0<\frac{M}{L}<\frac{1}{\alpha p}
        \quad\text{and}\quad
        \alpha p<h<\frac{L}{M}.
    \]
    There exists $R_0>0$ such that every interval $[u,u+h]\subset[R_0,\infty)$ contains a point of $T$. Write $\ell=L/M$. For each $n$ sufficiently large such that $nL\geq R_0$ and each $s=0,\ldots,M-1$, choose one point of $T$ in $[nL+s\ell,nL+s\ell+h]$.
    
    This gives a uniformly discrete subsequence $T'=(\tau_j)$ with separation at least $\ell-h$ and with exactly $M$ points in each interval $[nL,(n+1)L)$ sufficiently far from the origin. Since $\sigma<A$, the values $f(\tau^\frac{1}{p})e^{\sigma\pi\tau}$ are bounded on $T'$. By Lemma~\ref{lem:interpolation}, applied to $T'$, there is an entire function $\tilde g$ such that
    \[
    \tilde g(\tau)=f(\tau^\frac{1}{p})e^{\sigma\pi\tau}\quad\text{for all $\tau\in T'$}.
    \]
    Let $g(z)=\tilde g(z)e^{-\sigma\pi z}$. Then $g(\tau)=f(\tau^\frac{1}{p})$ for $\tau\in T'$. By Lemma~\ref{lem:canonical-product-estimate}, we have the estimate
    \[
    \sup_{|\theta|\leq\omega}\limsup_{r\to\infty}\frac{\log|g(re^{i\theta})|}{r}\leq-2\pi(\kappa_f+\delta)    
    \]
    for some small angle $\omega>0$ and some $\delta>0$. We thus have
    \[
    |e^{2\pi(\kappa_f-\varepsilon)z}g(z)|\leq C e^{2\pi(\kappa_f-\varepsilon)|z|}e^{-2\pi(\kappa_f+\delta)|z|}= o(1)\quad\text{as $|z|\to\infty$}
    \]
    when $|\arg(z)|\leq \omega$. This means that in the angle $\min(\varphi,\omega)$, we have $|F_\varepsilon(z)|=O(e^{a_0\pi|\Im(z)|})$ for $F_\varepsilon(z)=e^{2\pi(\kappa_f-\varepsilon)z}(f(z^\frac{1}{p})-g(z))$.
    
    We now have $F_\varepsilon(\tau)=0$ for $\tau\in T'$. Since $T'$ contains exactly $M$ points in each interval which is sufficiently far from the origin and of the length-$L$ subdivision, and since $\frac{M}{L}>a_0$, applying Lemma~\ref{lem:jensen-counting} with $\delta_0=\frac{M}{La_0}-1>0$ gives
    \begin{equation}\label{eq:jensen-negative-bound}
    \limsup_{x\to\infty}\frac{\log|F_\varepsilon(x)|}{x}\leq -2a_0\delta_0\sin(\min(\varphi,\omega))<0
    \end{equation}
    independently of $\varepsilon>0$. Since the indicator of $e^{2\pi\kappa_f x}g(x)$ on the positive real axis is strictly smaller than that of $G_\varepsilon(x)$, the term $e^{2\pi(\kappa_f-\varepsilon)x}g(x)$ is negligible there. Consequently,
    \[
    \limsup_{x\to\infty}\frac{\log|G_\varepsilon(x)|}{x}=\limsup_{x\to\infty}\frac{\log|G_\varepsilon(x)-e^{2\pi(\kappa_f-\varepsilon)x}g(x)|}{x}=\limsup_{x\to\infty}\frac{\log|F_\varepsilon(x)|}{x}.
    \]
    On the other hand, the definition of $\kappa_f$ gives
    \[
    \limsup_{x\to\infty}\frac{\log|G_\varepsilon(x)|}{x}=-2\pi\varepsilon.
    \]
    Letting $\varepsilon\to0^+$ contradicts~\eqref{eq:jensen-negative-bound}. Hence, the estimate
    \[
    \kappa_f\geq(2\alpha)^{-1} \frac{\tan p\theta}{p}-\frac{1}{(q\kappa_{\hat{f}})^\frac{p}{q}}\frac{|\sin\theta|^p}{p\cos(p\theta)},\quad0<\theta<\frac{\pi}{2p}
    \]
    must hold. Applying the trigonometric inequality
    \begin{equation}\label{eq:trig-ineq}
            \frac{1}{p}\tan p\theta>\frac{\sin\theta}{(\cos p\theta)^\frac{1}{p}},\quad 0<\theta<\frac{\pi}{2p}
    \end{equation}
    and setting $\eta=\frac{\sin\theta}{(\cos p\theta)^\frac{1}{p}}$, we get
    \[
    \kappa_f>(2\alpha)^{-1}\eta-\frac{1}{(q\kappa_{\hat{f}})^\frac{p}{q}}\frac{\eta^p}{p}.
    \]
    Since $\eta$ ranges over all of $(0,\infty)$ as $\theta$ ranges over $(0,\frac{\pi}{2p})$, we may choose $\eta=q\kappa_{\hat{f}}(2\alpha)^{1-q}$. We then get
\begin{align*}
        \kappa_f&>(2\alpha)^{-1} q\kappa_{\hat{f}}(2\alpha)^{1-q}-\frac{1}{(q\kappa_{\hat{f}})^\frac{p}{q}}\frac{(q\kappa_{\hat{f}}(2\alpha)^{1-q})^p}{p}= q\kappa_{\hat{f}}(2\alpha)^{-q}-\frac{(2\alpha)^{-q}q\kappa_{\hat{f}}}{p}=(2\alpha)^{-q}\kappa_{\hat{f}}.
\end{align*}
    Hence $\kappa_f>(2\alpha)^{-q}\kappa_{\hat{f}}$.
\end{proof}
\section{Proof of Theorem~\ref{thm:discrete-beurling}}
\begin{proof}[{Proof of Theorem~\ref{thm:discrete-beurling}}]
    Let $\alpha=\frac{1}{2}(\frac{b}{a})^\frac{1}{q}$ and $\beta=\frac{1}{2}\left(\frac{a}{b}\right)^\frac{1}{p}$. By Lemma~\ref{lem:gelfand-shilov}, we know that $f\in\mathcal{S}(p,q)$.
    
    Assume now that $\kappa_f>(2\alpha)^{-q}\kappa_{\hat{f}}$ and $\kappa_{\hat{f}}>(2\beta)^{-p}\kappa_f$. Combining them yields $\kappa_f>(2\alpha)^{-q}(2\beta)^{-p}\kappa_f$. So, $2^{p+q}\alpha^q\beta^p=(2\alpha)^q(2\beta)^p>1$. For H\"older conjugates $p,q$, we know that $p+q=pq$, so $2\alpha^\frac{1}{p}\beta^\frac{1}{q}>1$. This is a contradiction since $\alpha^\frac{1}{p}\beta^\frac{1}{q}\leq\frac{1}{2}$. Hence, we must have either $\kappa_f\leq(2\alpha)^{-q}\kappa_{\hat{f}}$ or $\kappa_{\hat{f}}\leq(2\beta)^{-p}\kappa_f$.
    
    Without loss of generality, assume that $\kappa_f\leq(2\alpha)^{-q}\kappa_{\hat{f}}$. If $\kappa_f<\frac{a}{2}$, choose $a'<a$ such that $2\kappa_f<a'$. The polynomial factor in the hypothesis is absorbed by lowering the exponential constant from $a$ to $a'$, so
    \[
    \sup_{\lambda\in\Lambda}|f(\lambda)|e^{a'\pi|\lambda|^p}<\infty.
    \]
    Lemma~\ref{lem:kappa-bound} applied with $A=a'$ gives the contradiction $\kappa_f>(2\alpha)^{-q}\kappa_{\hat{f}}$. Hence, we know that $\kappa_f\geq \frac{a}{2}$. 
    
    Assume now that $\kappa_{\hat{f}}<\frac{b}{2}$. Choose $b'<b$ such that $2\kappa_{\hat{f}}<b'$. Again, the polynomial factor in the hypothesis is absorbed by lowering the exponential constant from $b$ to $b'$, so
    \[
    \sup_{\mu\in M}|\hat f(\mu)|e^{b'\pi|\mu|^q}<\infty.
    \]
    Applying Lemma~\ref{lem:kappa-bound} to $\hat f$ with parameters $(q,p)$ gives $\kappa_{\hat{f}}>(2\beta)^{-p}\kappa_f$, so $\kappa_{\hat{f}}>(2\beta)^{-p}\kappa_f\geq \frac{a}{2}(2\beta)^{-p}=\frac{b}{2}$, which is a contradiction. So, we also have $\kappa_{\hat{f}}\geq\frac{b}{2}$.
    By the definition of $\kappa_f$ and $\kappa_{\hat{f}}$, we now know that 
\[
h_f(0),\, h_f(\pi)\leq-a\pi\quad\text{and}\quad h_{\hat f}(0),\, h_{\hat f}(\pi)\leq-b\pi.
\]
Combining this with Lemma~\ref{lem:kappa-estimate}, we see that $h_f(\theta)\leq\frac{2\pi|\sin\theta|^p}{p(qb/2)^\frac{p}{q}}$. We now want to show that there exists a $\tilde\theta\in(0,\frac{\pi}{2p})$ such that 
\begin{equation}\label{eq:psi-indicator-bound}
    \frac{2\pi|\sin\tilde\theta|^p}{p(qb/2)^\frac{p}{q}}<-a\pi\cos(p\tilde\theta)+\frac{1}{\alpha p}\pi\sin(p\tilde\theta).
\end{equation}
Rearranging, we see that this is equivalent to
\begin{equation}\label{eq:psi-indicator-bound-rearranged}
    \frac{a}{2}<(2\alpha)^{-1} \frac{\tan p\tilde\theta}{p}-\frac{1}{(qb/2)^\frac{p}{q}}\frac{|\sin\tilde\theta|^p}{p\cos(p\tilde\theta)}.
\end{equation}
Letting $\eta:=\frac{\sin \tilde\theta}{(\cos p\tilde\theta)^\frac{1}{p}}$ and applying \eqref{eq:trig-ineq}, we see that
\begin{equation}\label{eq:indicator-bound-eta}
    (2\alpha)^{-1} \frac{\tan p\tilde\theta}{p}-\frac{1}{(qb/2)^\frac{p}{q}}\frac{|\sin\tilde\theta|^p}{p\cos(p\tilde\theta)}>(2\alpha)^{-1} \eta-\frac{\eta^p}{p(qb/2)^\frac{p}{q}}.
\end{equation}
Since $\eta$ can attain any value in $(0,\infty)$ depending on our choice of $\tilde{\theta}$, we can choose $\tilde\theta\in(0,\frac{\pi}{2p})$ such that $\eta=\frac{qb}{2(2\alpha)^{q-1}}$. Putting this into \eqref{eq:indicator-bound-eta}, we get the inequality
\begin{align*}
    (2\alpha)^{-1} \frac{\tan p\tilde\theta}{p}-\frac{1}{(qb/2)^\frac{p}{q}}\frac{|\sin\tilde\theta|^p}{p\cos(p\tilde\theta)}>(2\alpha)^{-1} \eta-\frac{\eta^p}{p(qb/2)^\frac{p}{q}}= \frac{a}{2}.
\end{align*}
This shows that~\eqref{eq:psi-indicator-bound-rearranged} holds and consequently we can pick $\tilde\theta\in(0,\frac{\pi}{2p})$ such that~\eqref{eq:psi-indicator-bound} holds. 

Choose $L>0$ and $M\in\mathbb{N}$ such that $d:=M/L<(\alpha p)^{-1}$ and $d$ is sufficiently close to $(\alpha p)^{-1}$ so that
\begin{equation}\label{eq:psi-indicator-bound-density}
    \frac{2\pi|\sin\tilde\theta|^p}{p(qb/2)^\frac{p}{q}}<-a\pi\cos(p\tilde\theta)+d\pi\sin(p\tilde\theta).
\end{equation}
Choose $h$ such that $\alpha p<h<L/M$.

Let $K_0$ be the exponent in the polynomial weight from the hypothesis, and choose an integer $m\geq K_0$. Restricting to sufficiently large positive points of $\Lambda$, write $T=(\lambda_j^p)$. The strict density assumption implies that every interval of length h sufficiently far from the origin contains a point of $T$. Writing $\ell=L/M$, choose one point of $T$ in each interval
\[
[nL+s\ell,nL+s\ell+h],
\quad s=0,\ldots,M-1,
\]
for each sufficiently large $n$. This gives a uniformly discrete subsequence $T'=(\tau_j)\subset T$ with separation at least $\ell-h$ and with exactly $M$ points in each interval which is sufficiently far from the origin $[nL,(n+1)L)$. The interpolation data
\[
\eta(\tau):=f(\tau^\frac{1}{p})e^{a\pi\tau}(1+\tau)^{-m},\quad \tau\in T',
\]
are bounded by the hypothesis on the points of $\Lambda$. By Lemma~\ref{lem:interpolation}, applied to $T'$, there is an entire function $g$ such that
\[
g(\tau)=f(\tau^\frac{1}{p})e^{a\pi\tau}(1+\tau)^{-m},\quad \tau\in T',
\]
and, for some $C,N>0$ and $n_0\geq0$,
\[
|g(re^{i\theta})|\leq \delta^{-1}C(1+r)^N|\Pi(re^{i\theta})|
\]
for all $\theta\in\left(-\frac{\pi}{2},\frac{\pi}{2}\right)$ and $r>0$ where $\delta:=\inf_{\tau\in T'}|re^{i\theta}-\tau|$, and
\[
|\Pi(x)|+|g(x)|\lesssim(1+|x|)^{n_0}\quad\text{for all $x\in\mathbb R$}.
\]
Consider now the corresponding canonical product $\Pi$. By Lemma~\ref{lem:canonical-product-estimate}, we know that the estimate
\[
    \log|\Pi(re^{i\theta})|=d\pi r|\sin(\theta)|+o(r)\quad\text{as $r\to\infty$},
    \]
holds uniformly with respect to $\theta\in[0,2\pi]$ outside a $C^0$-set of disks centered at the zeros of $\Pi$.
In the sector $|\arg z|< \frac{\pi}{2}$, $\Pi(z)=0$ precisely at the points of $T'$, and these zeros are simple. Hence the function
\[
\Psi(z):=\frac{f(z^\frac{1}{p})e^{a\pi z}-g(z)(1+z)^m}{\Pi(z)(1+z)^{N+m}}
\]
is holomorphic in the sector $|\arg z|<\frac{\pi}{2}$. By~\eqref{eq:psi-indicator-bound-density},
\[
\frac{|f(r^\frac{1}{p}e^{i\tilde{\theta}})e^{a\pi re^{ip\tilde{\theta}}}|}{|\Pi(re^{ip\tilde{\theta}})|(1+r)^{N+m}}=o(1)
\]
and the same holds on the ray $-p\tilde\theta$. We also know that
\[
\frac{|g(re^{ip\tilde{\theta}})|(1+r)^m}{|\Pi(re^{ip\tilde{\theta}})|(1+r)^{N+m}}=O(1).
\]
	Thus outside the $C^0$-set of disks the estimates above give
	\begin{equation}\label{eq:subexponential-bound}
	    |\Psi(z)|=O(e^{c|z|^{1+\varepsilon}})
	\end{equation}
	for every $c,\varepsilon>0$ in the closed sector $|\arg z|\leq p\tilde\theta$. We now use the usual proof of the Phragm\'en--Lindel\"of principle, but only along radii avoiding the exceptional disks. Since the exceptional disks form a $C^0$-set, there exists a sequence $R_k\to\infty$ such that the circular arcs
	\[
	\{R_ke^{i\theta}:|\theta|\leq p\tilde\theta\}
	\]
	do not meet any exceptional disk. Choose $1<\rho<\frac{\pi}{2p\tilde\theta}$, and take the branch of $z^\rho$ in the sector $|\arg z|<p\tilde\theta$. For $\varepsilon_0>0$, apply the maximum principle to $\Psi(z)e^{-\varepsilon_0 z^\rho}$ in the truncated sectors $\{z:|\arg z|<p\tilde\theta,\ |z|<R_k\}$. On the two radial sides the preceding estimates give a uniform bound, while on the circular arcs the bound~\eqref{eq:subexponential-bound} for $\Psi$ is dominated by $e^{-\varepsilon_0\operatorname{Re} z^\rho}$ when $k$ is large, after choosing the exponent in~\eqref{eq:subexponential-bound} smaller than $\rho$. Letting $k\to\infty$ and then $\varepsilon_0\to0^+$ shows that $\Psi$ is bounded in the sector $|\arg z|<p\tilde{\theta}$. Consequently, for all $x>0$,
	\[
	|f(x^\frac{1}{p})e^{a\pi x}-g(x)(1+x)^m|\lesssim |\Pi(x)|(1+x)^{N+m}.
	\]
	Using the real-line bounds from Lemma~\ref{lem:interpolation} and the triangle inequality,
	\begin{align*}
	    |f(x^\frac{1}{p})e^{a\pi x}|&\leq |f(x^\frac{1}{p})e^{a\pi x}-g(x)(1+x)^m|+|g(x)(1+x)^m|\\
	    &\lesssim(1+x)^{N+m+n_0}+(1+x)^{m+n_0}\\
	    &\lesssim (1+x)^{N+m+n_0}.
	\end{align*}
Repeating the same argument on the negative real axis, there exists $\tilde{K}\in\mathbb{N}$ such that
\[
|f(x)|=O((1+|x|)^{\tilde{K}}e^{-a\pi |x|^p}).
\]
Repeating the argument for $\hat{f}$ gives the desired bound.
\end{proof}
\section*{Acknowledgments}
I want to thank Kristian Seip for helpful comments and feedback on the exposition.

\bibliographystyle{plain}
\bibliography{references}

@misc{adve2023density,
  author = {Adve, A.},
  title = {Density criteria for {Fourier} uniqueness phenomena in $\mathbf{R}^d$},
  year = {2023},
  eprint = {2306.07475},
  archiveprefix = {arXiv},
  primaryclass = {math.CA},
  note = {\href{https://arxiv.org/abs/2306.07475}{arXiv:2306.07475}},
}

@article{ramos2025pauli,
  author = {Ramos, J. P. G. and Sousa, M.},
  title = {On {Pauli} pairs and {Fourier} uniqueness problems},
  journal = {J. Lond. Math. Soc. (2)},
  volume = {112},
  number = {6},
  pages = {Paper No. e70358, 29},
  year = {2025},
  doi = {10.1112/jlms.70358},
}

@article{Radchenko2019,
  author = {Radchenko, D. and Viazovska, M.},
  title = {{Fourier} interpolation on the real line},
  journal = {Publ. Math. Inst. Hautes \'Etudes Sci.},
  volume = {129},
  pages = {51--81},
  year = {2019},
  doi = {10.1007/s10240-018-0101-z},
}

@article{ramos2022fourier,
  author = {Ramos, J. P. G. and Sousa, M.},
  title = {{Fourier} uniqueness pairs of powers of integers},
  journal = {J. Eur. Math. Soc. (JEMS)},
  volume = {24},
  number = {12},
  pages = {4327--4351},
  year = {2022},
  doi = {10.4171/jems/1194},
}

@article{kulikov2025,
  author = {Kulikov, A. and Nazarov, F. and Sodin, M.},
  title = {{Fourier} uniqueness and non-uniqueness pairs},
  journal = {J. Math. Phys. Anal. Geom.},
  volume = {21},
  number = {1},
  pages = {84--130},
  year = {2025},
  doi = {10.15407/mag21.01.04},
}

@misc{lysen2025criticalasymmetricfourieruniqueness,
  author = {Lysen, T. K.},
  title = {Critical and asymmetric {Fourier} uniqueness pairs},
  year = {2025},
  eprint = {2509.17600},
  archiveprefix = {arXiv},
  primaryclass = {math.CA},
  note = {\href{https://arxiv.org/abs/2509.17600}{arXiv:2509.17600}},
}

@book{levinentirefunctions,
  author = {Levin, B. Ya.},
  title = {Lectures on entire functions},
  series = {Translations of Mathematical Monographs},
  volume = {150},
  note = {In collaboration with and with a preface by Yu.\ Lyubarskii,
          M. Sodin, and V. Tkachenko. Translated from the Russian manuscript by
          V. Tkachenko},
  publisher = {American Mathematical Society},
  address = {Providence, RI},
  pages = {xvi+248},
  year = {1996},
  doi = {10.1090/mmono/150},
}

@article{hardyuncertainty,
  author = {Hardy, G. H.},
  title = {A theorem concerning {Fourier} transforms},
  journal = {J. London Math. Soc.},
  volume = {8},
  number = {3},
  pages = {227--231},
  year = {1933},
  doi = {10.1112/jlms/s1-8.3.227},
}

@article{gelfandshilovfourier,
  author = {Chung, J. and Chung, S.-Y. and Kim, D.},
  title = {Characterizations of the {Gel'fand-Shilov} spaces via {Fourier}
           transforms},
  journal = {Proc. Amer. Math. Soc.},
  volume = {124},
  number = {7},
  pages = {2101--2108},
  year = {1996},
  doi = {10.1090/S0002-9939-96-03291-1},
}

@book{gelfandshilov,
  author = {Gel'fand, I. M. and Shilov, G. E.},
  title = {Generalized functions. {Vol}. 2},
  note = {Spaces of fundamental and generalized functions. Translated from the
          1958 Russian original by M. D. Friedman, A. Feinstein, and
          C. P. Peltzer. Reprint of the 1968 English translation},
  publisher = {AMS Chelsea Publishing},
  address = {Providence, RI},
  pages = {x+261},
  year = {2016},
  doi = {10.1090/chel/378},
}

@article{MR1574180,
  author = {Morgan, G. W.},
  title = {A note on {Fourier} transforms},
  journal = {J. London Math. Soc.},
  volume = {9},
  number = {3},
  pages = {187--192},
  year = {1934},
  doi = {10.1112/jlms/s1-9.3.187},
}

@book{MR1057614,
  author = {Beurling, A.},
  title = {The collected works of {Arne Beurling}. {Vol}. 2},
  series = {Contemporary Mathematicians},
  note = {Harmonic analysis. Edited by L. Carleson, P. Malliavin,
          J. Neuberger, and J. Wermer},
  publisher = {Birkh\"auser Boston, Inc.},
  address = {Boston, MA},
  pages = {xx+389},
  year = {1989},
}

@unpublished{lysen-discrete-pauli,
  author = {Lysen, T. K.},
  title = {Discrete {Pauli} pairs},
  note = {In preparation},
}
\end{document}